\numberwithin{equation}{section}
\newtheorem{theorem}{Theorem}[section]
\newtheorem{lemma}[theorem]{Lemma}
\newtheorem{problem}[theorem]{Problem}
\newtheorem{corollary}[theorem]{Corollary}
\newtheorem{proposition}[theorem]{Proposition}
\newtheorem{claim}[theorem]{Claim}
\theoremstyle{definition}
\newtheorem{example}[theorem]{Example}
\newcommand{\dCF}{C_{{\downarrow}\mathsf F}}
\newcommand{\IR}{\mathbb R}
\newcommand{\IN}{\mathbb N}
\newcommand{\w}{\omega}
\newcommand{\F}{\mathcal F}
\newcommand{\Ra}{\Rightarrow}
\newcommand{\U}{\mathcal U}
\newcommand{\V}{\mathcal V}
\newcommand{\e}{\varepsilon}
\newcommand{\cN}{\mathcal N}
\begin{document}

\title[]{Lusin and Suslin properties of function  spaces}

\author{Taras Banakh}
\address{Jan Kochanowski University in Kielce and
  Ivan Franko National University in Lviv}
\curraddr{}
\email{t.o.banakh@gmail.com}
\thanks{}
\author{Leijie Wang}
\address{department of Mathematics, Shantou University, Shantou, Guangdong,
515063, PR China}
\email{leijie.wan@qq.com}
\thanks{}

\subjclass[2010]{54C35; 54H05}

\date{}

\dedicatory{}

\keywords{Fell hypograph topology, compact-open topology,
pointwise convergence topology, Lusin space, Suslin space, $\aleph_0$-space, cosmic space, $\w^\w$-base}

\begin{abstract} A topological space is {\em Suslin} ({\em Lusin}) if it is a continuous (and bijective) image of a Polish space. For a Tychonoff space $X$ let $C_p(X)$, $C_k(X)$ and $\dCF(X)$ be the spaces of continuous real-valued functions on $X$, endowed with the topology of pointwise convergence, the compact-open topology, and the Fell hypograph topology, respectively. For a metrizable space $X$ we prove the equivalence of the following statements: (1) $X$ is $\sigma$-compact,  (2) $C_p(X)$ is Suslin, (3) $C_k(X)$ is Suslin, (4) $\dCF(X)$ is Suslin, (5) $C_p(X)$ is Lusin, (6) $C_k(X)$ is Lusin, (7) $\dCF(X)$ is Lusin, (8) $C_p(X)$ is $F_\sigma$-Lusin, (9) $C_k(X)$ is $F_\sigma$-Lusin, (10) $\dCF(X)$ is $C_{\delta\sigma}$-Lusin. Also we construct an example of a sequential $\aleph_0$-space $X$ with a unique non-isolated point such that the function spaces $C_p(X)$, $C_k(X)$ and  $\dCF(X)$ are not Suslin.
\end{abstract}

\maketitle

\section{Introduction}

In this paper we study the descriptive properties of the spaces $C_p(X)$, $C_k(X)$ and $\dCF(X)$ of continuous real-valued functions on a Tychonoff space $X$.

The function space $C_p(X)$ is the space $C(X)$ of continuous real-valued functions on $X$, endowed with the topology of pointwise convergence.
This topology is generated by the
subbase consisting of the sets $$
\lfloor x;r\rfloor:=\{f\in C(X):f(x)>r\}\mbox{ \ and \ } \lceil x;r\rceil:=\{f\in C(X):f(x)<r\}
$$
 where $x\in X$ and $r$ is a real number.
The function spaces $C_p(X)$ were thoroughly studied in the monographs \cite{Arh} and \cite{Tk1}, \cite{Tk2}, \cite{Tk3}.

The function space $C_k(X)$ is the space $C(X)$ endowed with the compact-open topology. This topology is generated
by the subbase consisting of the sets $$
\lfloor K;r\rfloor:=\{f\in C(X):\min f(K)>r\}\mbox{ \ and \ } \lceil K;r\rceil:=\{f\in C(X):\max f(K)<r\}
$$
 where $K$ is a nonempty compact set in $X$ and $r$ is a real number.
The function spaces $C_k(X)$ are also well-studied in General Topology  \cite[\S3.4]{Eng}, \cite{McNbook} and Functional Analysis \cite{Kakol}.

Our third  object of study is the function space $\dCF(X)$. It is the space $C(X)$ endowed with the {\em Fell hypograph topology}. This topology is generated by the
 subbase consisting of the
sets $$
\lceil K;r\rceil:=\{f\in C(X):\max f(K)<r\}\mbox{ and }
\lceil U;r\rfloor:=\{f\in C(X):\sup f(U)>r\}
$$where $K$ is a nonempty
compact subset of $X$, $U$ is a nonempty open set in $X$, and $r$ is a real number.

The study of  the function spaces $\dCF(X)$ was initiated by McCoy and Ntantu \cite{McN} and continued in \cite{WB},  \cite{Yang5}, \cite{Yang2}, \cite{Yang1},  \cite{Yang3}, \cite{Yang6}, \cite{Yang4}.
\smallskip

The function spaces $C_p(X)$ and $C_k(X)$ are Tychonoff for any topological space $X$. In contrast, $\dCF(X)$ is Tychonoff only for weakly locally compact spaces $X$.

A space $X$ is defined to be \emph{weakly locally compact} if for every
compact set $K$ in $X$
there exists an open set $U$ in $X$ such that $K\subset\bar{U}$
and  $\bar{U}$ is compact.
The  Fr\'echet-Urysohn fan (see \cite[III.1.8]{Arh}) is an example of a weakly locally
compact space, which is not locally compact.

The following characterization
of the (complete) regularity of the function spaces $\dCF(X)$ can be found in \cite[Theorem 3.7]{McN}.

\begin{theorem}[McCoy, Ntantu]\label{regular}
For a Tychonoff space $X$, the following statements are equivalent:
\begin{enumerate}
\item $\dCF(X)$ is a Tychonoff space.
\item $\dCF(X)$ is a regular space.
\item The space $X$ is weakly locally compact.
\end{enumerate}
\end{theorem}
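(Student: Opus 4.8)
The plan is to establish the cycle $(1)\Ra(2)\Ra(3)\Ra(1)$. The implication $(1)\Ra(2)$ is immediate, since every Tychonoff space is regular. The engine of the whole argument is one continuity observation: if $U$ is a nonempty open set in $X$ whose closure $\overline U$ is compact, then the map $e_{\overline U}\colon\dCF(X)\to\IR$, $e_{\overline U}(f)=\max f(\overline U)$, is continuous. Indeed $\{f:\max f(\overline U)<c\}=\lceil\overline U;c\rceil$ is subbasic open, while $\{f:\max f(\overline U)>c\}=\lceil U;c\rfloor$, because for continuous $f$ the set $\{x:f(x)>c\}$ is open and $U$ is dense in $\overline U$, so $\max f(\overline U)>c$ iff $\sup f(U)>c$.

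For $(3)\Ra(1)$ I would prove complete regularity by producing, for each $f_0\in\dCF(X)$ and each subbasic neighborhood $S$ of $f_0$, a continuous $\varphi\colon\dCF(X)\to[0,1]$ with $\varphi(f_0)=1$ and $\varphi\equiv0$ off $S$; finite minima then handle arbitrary basic neighborhoods, and a short separate check gives the $T_1$ (hence Hausdorff) axiom. In both cases the idea is to manufacture a single compact regular-closed set $C=\overline U$ on which the continuous map $e_C$ already separates. For $S=\lceil K;r\rceil$ with $\max f_0(K)<r$, regularity of $X$ gives an open $O$ with $K\subseteq O\subseteq\overline O\subseteq\{x:f_0(x)<r\}$, weak local compactness gives an open $U_0$ with $K\subseteq\overline{U_0}$ and $\overline{U_0}$ compact, and $C:=\overline{U_0\cap O}$ is a compact regular-closed set with $K\subseteq C$ and $f_0<r$ on $C$; then $\varphi=\max\bigl(0,\min\bigl(1,(r-e_C)/(r-\max f_0(C))\bigr)\bigr)$ works. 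For $S=\lceil U;r\rfloor$ pick a witness $x_0\in U$ with $f_0(x_0)>r$ and, using regularity and weak local compactness at $\{x_0\}$, a compact regular-closed $C$ with $x_0\in C\subseteq U$; the analogous ramp built from $e_C$ does the job. This is the step where weak local compactness is indispensable.

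The hard direction is $(2)\Ra(3)$, which I would prove contrapositively: if $X$ is not weakly locally compact, then $\dCF(X)$ is not regular. Negating the definition, there is a nonempty compact $K$ contained in no compact regular-closed set. I claim $\mathbf 0$ and its neighborhood $O=\lceil K;1\rceil$ witness non-regularity, i.e.\ no open $V$ satisfies $\mathbf 0\in V\subseteq\overline V\subseteq O$. Supposing one did, choose a basic neighborhood $B\subseteq V$ of $\mathbf 0$ determined by compacta $K_1,\dots,K_m$ (thresholds $r_i>0$) and open sets $U_1,\dots,U_n$ (thresholds $s_j<0$), and set $K^+=\bigcup_i K_i$. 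The set $\overline{\mathrm{int}(K^+)}$ is compact and regular-closed, so by the choice of $K$ there is $p_0\in K\setminus\overline{\mathrm{int}(K^+)}$; fix an open $M\ni p_0$ with $M\cap\mathrm{int}(K^+)=\varnothing$, so that $M\setminus K^+$ is dense in $M$.

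I would then take any continuous $g\colon X\to[0,1]$ with $g(p_0)=1$ and $g\equiv0$ off $M$ and show $g\in\overline B\subseteq\overline V$; since $\max g(K)\ge g(p_0)=1$, this gives $g\in\overline V\setminus O$, the desired contradiction. To see $g\in\overline B$, take an arbitrary basic neighborhood of $g$ given by $\max g(K'_a)<\rho_a$ and $\sup g(U'_b)>\sigma_b$, and produce $h\in B$ meeting it. For each $b$ the open set $U'_b\cap\{g>\sigma_b\}$ is nonempty and contained in $M$, so it meets the dense set $M\setminus K^+$, yielding $z_b\in U'_b\setminus K^+$ with $g(z_b)>\sigma_b$; automatically $z_b$ avoids every $K'_a$ with $\rho_a\le\sigma_b$. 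Placing a small continuous bump at each $z_b$, supported off $K^+$ and off those $K'_a$ not containing $z_b$, with height in $(\sigma_b,1]$ below the relevant $\rho_a$, and letting $h$ be the finite pointwise maximum, one checks that $h\ge0$ vanishes on $K^+\supseteq K_i$ (so $\max h(K_i)=0<r_i$ and $\sup h(U_j)\ge0>s_j$), that $\max h(K'_a)<\rho_a$, and that $\sup h(U'_b)>\sigma_b$; hence $h\in B$ lies in the given neighborhood of $g$. The main obstacle is precisely this construction: extracting the boundary point $p_0$ from the failure of weak local compactness and verifying that these spike functions can simultaneously respect all upper (compact) constraints of $B$ while meeting all lower (open) constraints imposed by $g$.
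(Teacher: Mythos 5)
The paper does not actually prove this statement: it is quoted verbatim from McCoy and Ntantu \cite[Theorem 3.7]{McN}, so there is no in-paper argument to compare yours against. Judged on its own terms, your proof is essentially correct. The continuity of $e_{\overline U}$ is verified properly (the identity $\{f:\max f(\overline U)>c\}=\lceil U;c\rfloor$ does follow from the density of $U$ in $\overline U$ together with the openness of $\{f>c\}$), and it carries the whole of $(3)\Rightarrow(1)$: the sets $C=\overline{U_0\cap O}$ do contain $K$ (respectively $x_0$), since any point of $\overline{U_0}\cap O$ lies in $\overline{U_0\cap O}$, the clipped ramps built from $e_C$ separate $f_0$ from the complement of each subbasic set, and finite minima plus the (genuinely short) $T_1$ check --- use $\lceil \{x\};c\rceil$ for one point and $\lceil f^{-1}((-\infty,c));c\rfloor$ for the other --- give Tychonoffness. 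The contrapositive argument for $(2)\Rightarrow(3)$ is also sound: $\overline{\mathrm{int}(K^+)}$ is a compact regular-closed set, so $p_0$ exists, $M\setminus K^+$ is dense in $M$, and the spike function $h$ lies both in $B$ and in the prescribed neighborhood of $g$, because $\rho_a>\max g(K'_a)\ge g(z_b)>\sigma_b$ whenever $z_b\in K'_a$, which leaves room for the heights $t_b$.

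The one slip worth flagging is the sentence ``the open set $U'_b\cap\{g>\sigma_b\}$ is nonempty and contained in $M$'': when $\sigma_b<0$ this set equals all of $U'_b$ (since $g\ge0$) and need not meet $M$, so the required $z_b$ may fail to exist. But those indices need no spike at all: since $h\ge0$ and $U'_b\ne\emptyset$, the constraint $\sup h(U'_b)>\sigma_b$ holds automatically, exactly by the mechanism you already invoke for the constraints $\sup h(U_j)>s_j$ of $B$. Restricting the $z_b$-construction to the indices with $\sigma_b\ge0$ closes this gap, and nothing else in the argument is affected.
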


In this paper we shall be interested in descriptive properties of the function spaces, i.e., properties that can be described in terms of Borel sets.

Let us recall that a set $A$ in a topological space $X$ is called
\begin{itemize}
\item {\em Borel} if $A$ belongs to the smallest $\sigma$-algebra of sets in $X$, containing all open subsets in $X$;
\item {\em constructible} if $A$ belongs to the smallest algebra of sets in $X$, containing all open sets in $X$;
\item {\em clopen} if it is both open and closed;
\item an {\em $F_\sigma$-set} if $A$ is a countable union of closed sets in $X$;
\item a {\em $G_\delta$-set} if $A$ is a countable intersection of open sets in $X$;
\item a {\em $C_\sigma$-set} if $A$ is a countable union of constructible  sets in $X$;
\item a {\em $C_\delta$-set}  if $A$ is a countable intersection of constructible sets in $X$;
\item an {\em $C_{\sigma\delta}$-set} (resp. {\em $F_{\sigma\delta}$-set}) if $A$ is a countable intersection of $C_\sigma$-sets (resp. $F_\sigma$-sets) in $X$;
\item an {\em $C_{\delta\sigma}$-set} (resp. {\em $G_{\delta\sigma}$-set}) if $A$ is a countable union of $C_\delta$-sets (resp. $G_\delta$-sets) in $X$.
\end{itemize}
De Morgan's laws imply that each constructible set in a topological space can be written as a finite union $(U_1\cap F_1)\cup\cdots \cup (U_n\cap F_n)$ of intersections $U_i\cap F_i$ of open and closed sets.

For any set in a topological space we have the following implications.
$$\xymatrix{
&&\mbox{$F_\sigma$}\ar@{=>}[rr]\ar@{=>}[rd]&&F_{\sigma\delta}\ar@{=>}[dr]\\
&\mbox{closed}\ar@{=>}[rd]\ar@{=>}[ur]&&\mbox{$C_\sigma$}\ar@{=>}[rd]&&C_{\sigma\delta}\ar@{=>}[dr]\\
\mbox{clopen}\ar@{=>}[ru]\ar@{=>}[rd]&&\mbox{constructible}\ar@{=>}[ru]\ar@{=>}[dr]&&\mbox{$C_\sigma$ and $C_\delta$}\ar@{=>}[ur]\ar@{=>}[rd]&&\mbox{Borel}\\
&\mbox{open}\ar@{=>}[ru]\ar@{=>}[rd]&&\mbox{$C_\delta$}\ar@{=>}[ru]&&C_{\delta\sigma}\ar@{=>}[ur]\\
&&\mbox{$G_\delta$}\ar@{=>}[ru]\ar@{=>}[rr]&&G_{\delta\sigma}\ar@{=>}[ur]
}$$
A topological space $X$ is called {\em perfect} if each open set in $X$ is of type $F_\sigma$. For example, each metrizable space is perfect. In perfect spaces the above diagram simplifies to the following form.
$$\xymatrix{
&\mbox{closed}\ar@{=>}[rd]\ar@{=>}[r]&\mbox{$F_\sigma$}\ar@{<=>}[r]&\mbox{$C_\sigma$}\ar@{=>}[rd]\ar@{=>}[r]&F_{\sigma\delta}\ar@{<=>}[r]&C_{\sigma\delta}\ar@{=>}[dr]\\
\mbox{clopen}\ar@{=>}[ru]\ar@{=>}[rd]&&\mbox{constructible}\ar@{=>}[ru]\ar@{=>}[dr]&&\mbox{$C_\sigma$ and $C_\delta$}\ar@{=>}[ur]\ar@{=>}[rd]&&\mbox{Borel}\\
&\mbox{open}\ar@{=>}[ru]\ar@{=>}[r]&\mbox{$G_\delta$}\ar@{<=>}[r]&\mbox{$C_\delta$}\ar@{=>}[ru]\ar@{=>}[r]&G_{\delta\sigma}\ar@{<=>}[r]&C_{\delta\sigma}\ar@{=>}[ur]
}$$

Let $\Gamma$ be a class of Borel sets in topological spaces. A function $f:X\to Y$ between topological spaces is called {\em $\Gamma$-measurable} if for any open set $U\subset Y$ the preimage $f^{-1}(U)$ is Borel of class $\Gamma$ in $X$. In particular, a function $f:X\to Y$ is continuous if and only if it is $G$-measurable for the class $G$ of open sets in topological spaces.

A topological space $X$ is defined to be
\begin{itemize}
\item {\em Polish} if it is homeomorphic to a separable complete metric space;
\item {\em Suslin} if it is the image of a Polish space under a continuous map;
\item {\em Lusin} if it is the image of a Polish space under a continuous bijective map;
\item {\em $\Gamma$-Lusin} for a Borel class $\Gamma$ if it is the image of a Polish space $P$ under a continuous bijective map $f:P\to X$ such that the inverse map $f^{-1}:X\to P$ is $\Gamma$-measurable.
\end{itemize}
In the role of the class $\Gamma$ we shall consider the (additive) Borel classes $G$, $F_\sigma$, $C_\sigma$, $C_{\delta\sigma}$, $G_{\delta\sigma}$, $B$ of open sets, $F_\sigma$-sets, $C_\sigma$-sets, $C_{\delta\sigma}$-sets, $G_{\delta\sigma}$-sets, Borel sets in topological spaces, respectively.

For any topological space we have the  implications:
$$\mbox{Polish $\Leftrightarrow$ $G$-Lusin $\Ra$ $F_\sigma$-Lusin $\Ra$ $C_\sigma$-Lusin $\Ra$ $C_{\delta\sigma}$-Lusin $\Ra$ Lusin $\Ra$ Suslin}.$$
For regular spaces some implications in this chain turn into equivalences (see Theorem~\ref{t:L}):
$$\mbox{$F_\sigma$-Lusin $\Leftrightarrow$ $C_\sigma$-Lusin $\Ra$ $G_{\delta\sigma}$-Lusin $\Leftrightarrow$ $C_{\delta\sigma}$-Lusin $\Ra$ $B$-Lusin $\Leftrightarrow$ Lusin $\Ra$ Suslin}.$$

By Lusin-Suslin Theorem \cite[15.1]{Ke}, a subspace  $X$ of a Polish space $P$ is Lusin if and only if $X$ is a Borel subset of $P$. By the famous result of Suslin \cite[14.2]{Ke}, each uncountable Polish space contains a Suslin subset, which is not Borel and hence is not Lusin. In the class of metrizable spaces, Lusin and Suslin spaces were defined by Bourbaki in \cite{Bourbaki}.

It is well-known \cite[4.3.26]{Eng} that each Polish (= $G$-Lusin) space $X$ is a $G_\delta$-set in any Tychonoff space containing $X$ as a subspace. In Theorem~\ref{t:Lim} we shall prove that each $F_\sigma$-Lusin space $X$ is a $C_{\sigma\delta}$-set in each Hausdorff space containing $X$ as a subspace.

Let us observe that each Suslin space has a countable network, being a continuous image of a Polish space (which has a countable base).

We recall that a family $\cN$ of subsets of a topological space $X$ is
called
\begin{itemize}
 \item a \emph{network} if for any $x\in X$ and any
  neighborhood $O_x\subset X$ of $x$, there is a set $N \in \cN$ such that
  $x\in N\subset O_x$;
\item a \emph{$k$-network} if for each open set $U\subset X$
 and compact subset $K\subset U$, there is a finite subfamily
 $\mathcal{F}\subset \cN$ such that $K \subset \bigcup\mathcal F \
 \subset U$.
\end{itemize}

A topological space $X$ is called
\begin{itemize}
\item   \emph{cosmic} if $X$ is regular and has a countable network;
\item an {\em $\aleph_0$-space} if $X$ is regular and has a
 countable $k$-network.
\end{itemize}
For any  topological space we have the following implications (see \cite[\S4]{Grue}):
$$\mbox{metrizable separable space $\Ra$ $\aleph_0$-space $\Ra$ cosmic space}.$$




Function spaces $\dCF(X)$ and $C_k(X)$ possessing countable networks were characterized in \cite[Theorem 3.7 and 4.5]{McN} and \cite{Michael} (see also \cite[\S4.1]{McNbook}).

\begin{theorem}[McCoy, Ntantu, Michael]\label{t:k} For a Tychonoff space $X$ the following conditions are equivalent:
\begin{enumerate}
\item the function space $\dCF(X)$ has a countable network;
\item the function space $C_k(X)$ is cosmic;
\item the function space $C_k(X)$ is an $\aleph_0$-space;
\item $X$ is an $\aleph_0$-space.
\end{enumerate}
\end{theorem}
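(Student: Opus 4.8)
The plan is to prove the cycle of implications $(4)\Ra(3)\Ra(2)\Ra(1)\Ra(4)$, arranging the work so that the equivalences among $(2)$, $(3)$, $(4)$ rest on Michael's theorem about $k$-networks of compact-open function spaces, while $(1)$ is glued in through a comparison of the Fell hypograph and compact-open topologies on $C(X)$. This also matches the attribution: the passage among $(2)$, $(3)$, $(4)$ is Michael's, and the link between $(1)$ and the rest is the contribution of McCoy and Ntantu.

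First I would record the soft implications. The implication $(3)\Ra(2)$ is immediate from the already-stated chain $\aleph_0$-space $\Ra$ cosmic space, since $C_k(X)$ is Tychonoff, hence regular, for every $X$. For $(2)\Ra(1)$ I would show that the Fell hypograph topology on $C(X)$ is coarser than the compact-open topology, i.e.\ that the identity map $C_k(X)\to\dCF(X)$ is continuous: the subbasic set $\lceil K;r\rceil$ is common to both topologies, while $\lceil U;r\rfloor=\{f:\sup f(U)>r\}=\bigcup_{x\in U}\lfloor x;r\rfloor$ is a union of compact-open subbasic sets, hence $C_k(X)$-open. Since a countable network is preserved by continuous surjections (the images of the network members form a network of the range), a countable network of $C_k(X)$ pushes forward along this identity bijection to a countable network of $\dCF(X)$.

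For $(4)\Ra(3)$ I would reproduce Michael's construction of a countable $k$-network for $C_k(X)$. Starting from a countable $k$-network $\mathcal P$ for $X$, closed under finite unions, form the countable family $\mathcal E$ of all finite intersections $\bigcap_{i\le n}[P_i;V_i]$, where $P_i\in\mathcal P$, each $V_i$ is an interval with rational endpoints, and $[P;V]:=\{f\in C(X):f(P)\subseteq V\}$. Given a compact $\mathcal K\subseteq C_k(X)$ inside an open set $\mathcal O$, for each $f\in\mathcal K$ one shrinks a basic compact-open neighbourhood of $f$ and applies the $k$-network property of $X$ to the preimages $f^{-1}(V_i')$ of the shrunk intervals, replacing the compacta by members of $\mathcal P$ and producing a member of $\mathcal E$ squeezed between $f$ and $\mathcal O$. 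The genuinely delicate point here is passing from this pointwise cover to a \emph{finite} subfamily: one invokes that the restriction $\{f|_L:f\in\mathcal K\}$ to each compact $L\subseteq X$ is equicontinuous (Arzel\`a--Ascoli on the compactum $L$), which lets one choose the data uniformly over a neighbourhood of $f$ and then extract a finite subcover by compactness of $\mathcal K$.

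The heart of the theorem is $(1)\Ra(4)$, which I expect to be the principal obstacle, since one must reconstruct the finer datum of a countable $k$-network for $X$ out of a countable network $\mathcal N$ of the coarser space $\dCF(X)$, without even assuming regularity of $\dCF(X)$. The plan is to derive from $\mathcal N$ a countable family $\mathcal D$ of subsets of $X$---for instance sets of the form $\{x\in X:\sup_{f\in N}f(x)>q\}$ and their relatives, for $N\in\mathcal N$ and $q\in\mathbb Q$---and then verify the $k$-network property as follows. Given a compact $K$ inside an open $U$, use that $X$ is Tychonoff to separate $K$ from the closed set $X\setminus U$ by a function $g\in C(X,[0,1])$ with $g\equiv 1$ on $K$ and $g\equiv 0$ off $U$; cover $K$ by finitely many small open sets on which $g$ stays large, and read off a basic Fell neighbourhood of $g$ whose defining ``hit'' sets $\lceil U';r\rfloor$ and ``miss'' sets $\lceil K';r\rceil$ localize $g$ near $K$ inside $U$. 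Feeding $g$ and this neighbourhood through the network $\mathcal N$ should yield members of $\mathcal D$ sandwiched between $K$ and $U$, which compactness of $K$ trims to a finite subfamily. The crux is to match the finite-subfamily requirement of a $k$-network to the one-sided information that the Fell hypograph topology records---``$f$ is large somewhere on an open set'' and ``$f$ is small everywhere on a compact set''---since, unlike the compact-open topology, it cannot directly constrain a function to be small on the non-compact complement $X\setminus U$. Once $(1)\Ra(4)$ is in place the cycle closes and all four conditions are equivalent.
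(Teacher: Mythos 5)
The paper does not actually prove Theorem~\ref{t:k}; it quotes it from McCoy--Ntantu and Michael, so your attempt can only be compared with the cited literature. Your cycle $(4)\Ra(3)\Ra(2)\Ra(1)\Ra(4)$ is the right architecture, and the two soft links are correct: $(3)\Ra(2)$ is the implication $\aleph_0$-space $\Ra$ cosmic, and $(2)\Ra(1)$ correctly observes that $\lceil U;r\rfloor=\bigcup_{x\in U}\lfloor x;r\rfloor$ makes the identity map $C_k(X)\to\dCF(X)$ continuous, so a countable network pushes forward along it. The step $(4)\Ra(3)$ is an appeal to Michael's theorem; your sketch (finite intersections of sets $[P;V]$ built from a countable $k$-network of $X$ and rational intervals, together with equicontinuity on each compact $L\subset X$ of a compact subset of $C_k(X)$) is an acceptable summary of a known argument, and since the paper itself only cites Michael here I will not press the details.

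The genuine gap is $(1)\Ra(4)$, and it sits exactly where you locate the ``crux'': your plan, as stated, does not get past it. You propose the sets $D(N,q)=\{x\in X:\sup_{f\in N}f(x)>q\}$ together with a separating function $g$ satisfying $g\equiv1$ on $K$ and $g\equiv0$ off $U$. With that orientation, $K\subset D(N,q)$ is free (since $g\in N$), but $D(N,q)\subset U$ would force every $f\in N$ to be $\le q$ on the closed, generally non-compact set $X\setminus U$ --- and no Fell-open neighbourhood of $g$ can impose an upper bound on functions outside a compact set, because the hypograph subbase only records ``$\max f(K')<r$'' for compact $K'$ and ``$\sup f(V)>r$'' for open $V$. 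The repair is to reverse the roles of $K$ and $X\setminus U$ so that the one-sided information points the right way. Choose $g\in C(X,[0,1])$ with $g\equiv0$ on $K$ and $g\equiv1$ on $X\setminus U$ (possible since $X$ is Tychonoff and $K$ is compact). Then $g$ lies in the Fell-open set $\lceil K;\tfrac12\rceil$, so some $N\in\cN$ satisfies $g\in N\subset\lceil K;\tfrac12\rceil$. Put $E(N):=\{x\in X:\forall f\in N\;\;f(x)<\tfrac12\}$. Then $K\subset E(N)$ because every $f\in N$ has $\max f(K)<\tfrac12$, and $E(N)\subset U$ because $g\in N$ and $g\equiv1$ off $U$; the singleton $\{E(N)\}$ is the required finite subfamily. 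Hence $\{E(N):N\in\cN\}$ is a countable $k$-network for the regular space $X$, which closes the cycle. With this substitution your proof becomes complete modulo the cited theorem of Michael in $(4)\Ra(3)$.
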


The following characterization of cosmic spaces $C_p(X)$ is well-known and can be found in \cite[4.1.3]{McNbook} or \cite[I.1.3]{Arh}.

\begin{theorem}\label{t:Cp-cosmic} A Tychonoff space $X$ is cosmic  if and only if  $C_p(X)$ is cosmic.
\end{theorem}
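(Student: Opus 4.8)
The plan is to prove both implications by reducing each to the existence of a countable network, since regularity comes for free: $C_p(X)$ is always a subspace of the Tychonoff space $\IR^X$ and hence regular, while $X$ is regular by the Tychonoff hypothesis. So it suffices to show that $X$ has a countable network if and only if $C_p(X)$ does.

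For the implication ``$X$ cosmic $\Ra C_p(X)$ cosmic'', I would fix a countable network $\cN$ for $X$ and build a countable network for $C_p(X)$ directly out of $\cN$ and the rational intervals. The natural candidate is the family of sets
$$M(N_1,\dots,N_n;I_1,\dots,I_n):=\{g\in C(X): g(N_i)\subseteq \overline{I_i}\ \text{ for all }\ i\le n\},$$
indexed by finite sequences $N_1,\dots,N_n\in\cN$ and rational open intervals $I_1,\dots,I_n$; there are only countably many such sets. To see this is a network, I would take an arbitrary $f\in C_p(X)$ and a basic neighborhood $W=\{g\in C(X):|g(x_i)-f(x_i)|<\e,\ i\le n\}$, choose for each $i$ a rational interval $I_i$ with $f(x_i)\in I_i$ and $\overline{I_i}\subseteq(f(x_i)-\e,f(x_i)+\e)$, use the continuity of $f$ to find an open $U_i\ni x_i$ with $f(U_i)\subseteq I_i$, and finally pick $N_i\in\cN$ with $x_i\in N_i\subseteq U_i$. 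Then $f\in M(N_1,\dots,N_n;I_1,\dots,I_n)\subseteq W$, which is exactly the required network property.

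For the converse, ``$C_p(X)$ cosmic $\Ra X$ cosmic'', I would exploit the canonical evaluation map $\delta:X\to C_p(C_p(X))$ defined by $\delta(x)(f):=f(x)$. Since $X$ is Tychonoff, $\delta$ is a topological embedding: it is continuous and injective because $C(X)$ separates the points of $X$, and it is open onto its image because the topology of $X$ is the initial topology with respect to $C(X)$, so each subbasic open set $f^{-1}(V)$ maps to $\delta(X)\cap\{\varphi:\varphi(f)\in V\}$. Now, if $C_p(X)$ is cosmic, then it has a countable network, so by the already-proved forward direction (applied to the cosmic space $C_p(X)$ in place of $X$) the iterated space $C_p(C_p(X))$ also has a countable network. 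A subspace of a space with a countable network inherits one, so $\delta(X)\cong X$ has a countable network, and hence the Tychonoff space $X$ is cosmic.

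The main obstacle is packaged into the two ingredients above. In the forward direction the only real point is verifying that finitely many ``network-interval'' constraints suffice to capture an arbitrary pointwise-basic neighborhood; the delicate step is keeping the intervals \emph{closed} (so that the sets $M(\dots)$ are legitimate network members) while still shrinking them enough that $\overline{I_i}$ stays inside $(f(x_i)-\e,f(x_i)+\e)$. In the converse the crux is the embedding $X\hookrightarrow C_p(C_p(X))$, whose openness onto the image is exactly where the Tychonoff hypothesis is consumed; once that embedding is in hand, the conclusion follows formally from the forward direction together with the hereditarity of countable networks.
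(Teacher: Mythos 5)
Your proof is correct, and it coincides with the standard argument: the paper does not prove Theorem~\ref{t:Cp-cosmic} itself but cites it as well-known from \cite[4.1.3]{McNbook} and \cite[I.1.3]{Arh}, where the proof of $nw(X)=nw(C_p(X))$ runs exactly as you describe. Both your forward construction (sets $M(N_1,\dots,N_n;I_1,\dots,I_n)$ built from network members and closed rational intervals) and your converse via the canonical embedding $X\hookrightarrow C_p(C_p(X))$ together with hereditarity of countable networks are the arguments of those references, so there is nothing to add.
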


The following fundamental result is due to Calbrix \cite{Calbrix} (see also Theorem 9.7 in \cite[p.208]{Kakol}).

\begin{theorem}[Calbrix]\label{t:Calbrix} If for a Tychonoff space $X$ the function space $C_p(X)$ is Suslin, then $X$ is $\sigma$-compact.
\end{theorem}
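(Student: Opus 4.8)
The plan is to recast the conclusion as a descriptive-set-theoretic dichotomy on a countable set, in the spirit of Calbrix's analysis of Borel filters.

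First I would record the consequences of $C_p(X)$ being Suslin. A Suslin space has a countable network, so by Theorem~\ref{t:Cp-cosmic} the space $X$ is cosmic; fix a countable dense set $D=\{d_k:k\in\w\}\subseteq X$. Since a continuous real function is determined by its values on a dense set, the restriction map $r\colon C_p(X)\to\IR^{D}$, $r(f)=f{\restriction}D$, is a continuous injection into the Polish space $\IR^D\cong\IR^\w$. As the continuous image of a Suslin space is Suslin, $A:=r(C(X))$ is an analytic (linear) subspace of $\IR^\w$, and $r\colon C_p(X)\to A$ is a continuous bijection. Thus all of the information about $C_p(X)$ relevant here is carried by the single analytic set $A\subseteq\IR^\w$.

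Next I would encode $\sigma$-compactness of $X$ combinatorially through $A$. Consider the ideal $\I$ of all $S\subseteq\w$ for which $\{d_k:k\in S\}$ is relatively compact in $X$; it contains all finite sets and is closed under subsets and finite unions. Two facts drive the argument: (a) $X$ is $\sigma$-compact if and only if $\I$ is countably generated (if $X=\bigcup_nK_n$ then $S_n:=\{k:d_k\in K_n\}$ generate $\I$, and conversely a countable generating family yields compacta whose closures should recover $\overline D=X$); and (b) membership $S\in\I$ is expressible through $A$, because in the present cosmic, submetrizable setting a subset of $X$ is relatively compact precisely when every continuous function is bounded on it, i.e. when $\sup_{k\in S}|\varphi(d_k)|<\infty$ for all $\varphi\in A$. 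Hence $\I$ is an analytic ideal on $\w$ definable from the analytic set $A$.

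The main obstacle, and the heart of the proof, is the passage from ``$\I$ is analytic'' to ``$\I$ is countably generated''. Here I would invoke the dichotomy for definable (analytic) filters and ideals on $\w$ that is the real content of Calbrix's method (cf. \cite{Calbrix}, \cite{Ke}): the obstruction to countable generation of an analytic ideal is always a closed copy of a Baire-space–like structure, which inside $\IR^\w$ would force $A$ to be non-analytic. Concretely, if $\I$ were not countably generated one builds, by a fusion/diagonal argument over $\w^\w$, a closed embedding of $\w^\w$ into the relevant slices of $A$, contradicting analyticity of $A$; this is precisely where Suslinness (a surjection from a Polish space, not merely the existence of a compact resolution) is used, and it explains why the weaker hypothesis that $C_p(X)$ be $K$-analytic would not suffice. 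Once $\I$ is known to be countably generated, fact (a) returns the conclusion that $X$ is $\sigma$-compact. The two places still needing care are the converse direction of (a) (turning a countable generating family of $\I$ into an honest cover of $X$ by compacta, where density of $D$ and regularity of $X$ must be exploited) and the boundedness-versus-relative-compactness identification in (b); both are handled in the cosmic setting, so that the entire weight of the theorem rests on the analytic-ideal dichotomy above.
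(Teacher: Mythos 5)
The paper offers no proof of this statement --- it is quoted from Calbrix \cite{Calbrix} (see also \cite[Theorem 9.7]{Kakol}) --- so your argument has to stand on its own, and it does not. The decisive problem is that your intermediate target, namely that the ideal $\I$ of those $S\subset\w$ for which $\{d_k:k\in S\}$ is relatively compact is \emph{countably generated}, is false under the hypothesis of the theorem, so no amount of care in the later dichotomy step can rescue the strategy. Take $X=D=\mathbb Q$: here $C_p(X)$ is Suslin (indeed $F_\sigma$-Lusin, by Corollary~\ref{c:main}, since $\mathbb Q$ is $\sigma$-compact and metrizable), yet $\I$ is countably generated if and only if some countable family of compact subsets of $\mathbb Q$ is cofinal in all compact subsets, i.e.\ iff $\mathbb Q$ is hemicompact --- which it is not, being first countable and nowhere locally compact. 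What $\sigma$-compactness of $X$ actually corresponds to is the much weaker property that $X$ is covered by the closures of countably many members of $\I$; your equivalence (a) conflates ``countably generated'' (a cofinality, i.e.\ hemicompactness, condition) with ``$\sigma$-covering''. Even the covering reformulation leaves a gap in your converse direction: generators $S_n$ with compact closures $K_n$ only show that the $\sigma$-compact set $\bigcup_nK_n$ contains the dense set $D$, and a dense $\sigma$-compact subspace need not exhaust $X$; to push an arbitrary $x\in X$ into some $K_n$ you would need a sequence in $D$ converging to $x$, i.e.\ the Fr\'echet--Urysohn property, which a general Tychonoff (or even cosmic) space lacks.

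Two further problems. First, the complexity claim is wrong: $\I=\{S:\forall\varphi\in A\ \sup_{k\in S}|\varphi(d_k)|<\infty\}$ is a universal quantification over the analytic set $A$ of a Borel condition, hence coanalytic rather than analytic, and coanalytic ideals admit no usable ZFC dichotomy of the kind you invoke. Second, and independently of the complexity, the ``analytic-ideal dichotomy'' placed at the heart of your proof is not a citable theorem in the form stated: many Borel ideals (e.g.\ the $F_{\sigma\delta}$ ideal of density-zero sets) fail to be countably generated without producing a closed copy of $\w^\w$ anywhere; any correct version must exploit the specific coupling between $\I$ and the analytic set $A$, and making that coupling precise is exactly the content of Calbrix's argument, which your sketch replaces with the phrase ``by a fusion/diagonal argument''. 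So the proposal both aims at a false intermediate statement and leaves the genuinely hard step unproved.
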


For an Ascoli space $X$ the Suslin property of the function space $C_k(X)$ can be characterized in terms of $\IR$-universal $\w^\w$-based uniformities.

Let us recall that a topological space $X$ is called
\begin{itemize}
\item {\em Fr\'echet-Urysohn} if for each set $A\subset X$ and point $a\in\bar A$ there exists a sequence $\{a_n\}_{n\in\w}\subset A$ that converges to $a$;
\item {\em sequential} if a subset $A\subset X$ is closed in $X$ if and only if $A$ contains the limits of all sequences $\{a_n\}_{n\in\w}\subset A$ that converge in $X$;
\item a {\em $k$-space} if a subset $A\subset X$ is closed in $X$ if and only if for any compact subset $K\subset X$ the intersection $A\cap K$ is closed in $K$;
\item a {\em $k_\IR$-space} if a function $f:X\to\IR$ is continuous if and only if for every compact subset $K\subset X$ the restriction $f{\restriction}K$ is continuous;
\item {\em Ascoli} if the canonical map $\delta:X\to C_k(C_k(X))$, $\delta:x\mapsto\delta_x$, is continuous. The canonical map assigns to each point $x$ the Dirac functional $\delta_x:C_k(X)\to\IR$, $\delta_x:f\mapsto f(x)$.
\end{itemize}
By \cite{Noble}, each Tychonoff $k_\IR$-space is Ascoli. Therefore, for any Tychonoff space we obtain the following implications:
$$\mbox{first-countable $\Ra$ Fr\'echet-Urysohn $\Ra$ sequential $\Ra$ $k$-space $\Ra$ $k_\IR$-space $\Ra$ Ascoli.}
$$
None of these implications can be reversed, see Examples 1.6.18, 1.6.19 in \cite{Eng}, \cite{Michael2}, and \cite[6.7]{BGab}.

Next, we recall some information on $\w^\w$-based uniformities. Here we consider $\w^\w$ as a partially ordered space endowed with the partial order  $\le$ defined by $\alpha\le\beta$ iff $\alpha(n)\le\beta(n)$ for all $n\in\w$.

A uniformity $\U$ on a set $X$ is called {\em $\w^\w$-based} if it has a base $(U_\alpha)_{\alpha\in\w^\w}$ such that $U_\beta\subset U_\alpha$ for any $\alpha\le\beta$ in $\w^\w$. For example, any metrizable uniformity is $\w^\w$-based.

A uniformity $\U$ on a topological space $X$ is called {\em  $\IR$-universal} if it generates the topology of $X$ and every continuous function $f:X\to\IR$ is uniformly continuous in the uniformity $\U$.

A topological space $X$ is called {\em universally $\w^\w$-based} if its universal uniformity is $\w^\w$-based. The universal uniformity of $X$ is generated by the family of all continuous pseudometrics on $X$.

\begin{theorem}\label{t:Ascoli} For an (Ascoli) Tychonoff space $X$ the function space $C_k(X)$ is Suslin if (and only if) $X$ is separable and has an $\IR$-universal $\w^\w$-based uniformity.
\end{theorem}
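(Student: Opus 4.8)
The plan is to prove both implications, using that $C_k(X)$ is Suslin exactly when it is a continuous image of a Polish space, and translating throughout between equicontinuous compact subsets of $C_k(X)$ and entourages of a uniformity on $X$. Fix a countable dense set $D=\{d_i:i\in\w\}\subset X$, and let $E\colon C_k(X)\to\IR^D$, $E(f)=(f(d_i))_{i\in\w}$, be the restriction map; since $D$ is dense and $X$ is Hausdorff, $E$ is continuous and injective, so $C_k(X)$ always condenses onto a separable metrizable space. For the necessity part I let $(U_\alpha)_{\alpha\in\w^\w}$ be an $\w^\w$-base of the uniformity $\U$, assumed to consist of open symmetric entourages.

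First I would prove sufficiency, for an arbitrary Tychonoff $X$. Since $\U$ is $\IR$-universal and $\w^\w$-based, every $f\in C(X)$ is $\U$-uniformly continuous and hence has a modulus $\phi=(\phi_k)_{k\in\w}\in(\w^\w)^\w$ with $|f(x)-f(y)|\le 2^{-k}$ whenever $(x,y)\in U_{\phi_k}$. For $\phi\in(\w^\w)^\w$ and $\psi\in\w^\w$ I set \[\Phi(\phi,\psi)=\{f\in C(X):\ |f(d_i)|\le\psi(i)\ \forall i,\ \text{ and }\ |f(x)-f(y)|\le 2^{-k}\ \forall k\ \forall (x,y)\in U_{\phi_k}\}.\] Each $\Phi(\phi,\psi)$ is closed in $C_k(X)$, uniformly equicontinuous, and pointwise bounded (using density of $D$ together with the fact that $U_{\phi_0}$ is a neighbourhood of the diagonal), hence compact by the Ascoli--Arzel\`a theorem; on an equicontinuous family the compact-open topology coincides with pointwise convergence on $D$, so $\Phi(\phi,\psi)$ is metrizable and $E$ embeds it homeomorphically into $\IR^D$. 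As $(\w^\w)^\w\times\w^\w$ is order-isomorphic to $\w^\w$ and the sets $\Phi(\phi,\psi)$ increase with $(\phi,\psi)$ and cover $C(X)$, they form a compact resolution of $C_k(X)$. It then remains to check that $C_k(X)$ is cosmic, equivalently (Theorem~\ref{t:k}) that $X$ is an $\aleph_0$-space; here one uses that $X$ is separable and that every compact subspace of $X$ is metrizable, the latter because $\U$ restricted to a compactum is an $\w^\w$-based uniformity on a compact space and is therefore forced to be metrizable. Having a countable network and a compact resolution, $C_k(X)$ is Suslin by the standard descriptive-topology criterion (see \cite{Kakol}).

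For necessity assume $X$ is Ascoli and $C_k(X)$ is Suslin. Then $C_k(X)$ has a countable network, so it is cosmic and, by Theorem~\ref{t:k}, $X$ is an $\aleph_0$-space; in particular $X$ is separable. Being Suslin, $C_k(X)$ is $\K$-analytic and so admits a compact resolution $(Q_\alpha)_{\alpha\in\w^\w}$ with $\bigcup_\alpha Q_\alpha=C(X)$. Since $X$ is Ascoli, each compact $Q_\alpha\subset C_k(X)$ is evenly continuous, whence the evaluation $Q_\alpha\times X\to\IR$ is jointly continuous and \[\rho_\alpha(x,y):=\sup_{f\in Q_\alpha}\min\bigl(1,|f(x)-f(y)|\bigr)\] is a continuous pseudometric on $X$. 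The family $(\rho_\alpha)$ is monotone ($\alpha\le\beta\Ra\rho_\alpha\le\rho_\beta$, as $Q_\alpha\subseteq Q_\beta$), and the entourages $\{(x,y):\rho_\alpha(x,y)<2^{-\alpha(0)}\}$ constitute a monotone $\w^\w$-base of a uniformity $\U$ on $X$. This $\U$ generates the topology of $X$ (points are separated from closed sets by Urysohn functions, each of which lies in some $Q_\alpha$, so that the corresponding $\rho_\alpha$ separates them) and is $\IR$-universal: if $f\in Q_\alpha$, then $\rho_\alpha(x,y)<2^{-k}$ forces $|f(x)-f(y)|<2^{-k}$, so $f$ is $\U$-uniformly continuous. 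Thus $X$ is separable and carries an $\IR$-universal $\w^\w$-based uniformity.

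The main obstacle is the descriptive step in the sufficiency part: converting the compact resolution $(\Phi(\phi,\psi))$ into an honest continuous surjection from a Polish space. The naive route --- parametrizing a Polish set of pairs (modulus, boundary values on $D$) and sending each to the unique uniformly continuous extension --- fails, because that extension map is \emph{not} continuous: along a convergent sequence of parameters the moduli $\phi$ vary with no common bound, so the resulting functions need not be equicontinuous, and pointwise convergence on $D$ then does not upgrade to convergence in $C_k(X)$. Moreover no choice of $\w^\w$-base repairs this, since forcing the membership sets $\{\alpha:(x,y)\in U_\alpha\}$ to be open (which would restore continuity) is incompatible with the base being genuinely $\w^\w$-directed. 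This is precisely why the countable network is indispensable and why the conclusion is drawn through the upper-semicontinuous compact-valued formalism (using the network to make a continuous selection) rather than through an extension operator. Accordingly, the two points demanding real care are the verification that $X$ is an $\aleph_0$-space --- via metrizability of its compacta --- and the invocation of the cosmic-plus-compact-resolution criterion; the remaining checks (compactness of the $\Phi(\phi,\psi)$ and continuity of the $\rho_\alpha$) are routine.
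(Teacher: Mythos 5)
Your ``only if'' argument is essentially the paper's: the paper likewise takes the canonical compact resolution $K_\alpha=\xi({\downarrow}\alpha)$ induced by a continuous surjection $\xi:\w^{\IN}\to C_k(X)$, uses the Ascoli property (continuity of $\delta:X\to C_k(C_k(X))$) to see that the entourages $U_\alpha=\{(x,y):\sup_{f\in K_{\alpha{\restriction}\IN}}|f(x)-f(y)|<2^{-\alpha(0)}\}$ are open and form an $\w^\w$-base, and gets separability from Theorem~\ref{t:k}; your pseudometrics $\rho_\alpha$ are a repackaging of this, and your explicit check that $\U$ generates the topology is a point the paper glosses over. For the ``if'' part the paper simply cites \cite[7.5.1(18)]{Ban}, so your attempt at a self-contained argument there is a genuinely different route.

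That attempt, however, has a real gap at exactly the step you yourself identify as critical. You deduce that $X$ is an $\aleph_0$-space (equivalently, by Theorem~\ref{t:k}, that $C_k(X)$ is cosmic) from the two facts that $X$ is separable and that every compact subspace of $X$ is metrizable. This implication is false: the Sorgenfrey line is separable and all of its compact subsets are countable compact Hausdorff, hence metrizable, yet it is not cosmic (its square contains an uncountable discrete subspace, so it cannot have a countable network) and so is not an $\aleph_0$-space. Since the criterion you invoke from \cite{Kakol} (countable network plus compact resolution implies analytic) genuinely needs the countable network --- a compact resolution alone does not even guarantee $K$-analyticity --- the sufficiency proof does not close. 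What is actually required here is the nontrivial fact that a separable Tychonoff space with an $\IR$-universal $\w^\w$-based uniformity is an $\aleph_0$-space, which is part of what \cite[7.5.1]{Ban} provides and which does not follow from metrizability of compacta. The remaining checks in your sufficiency argument (compactness of the sets $\Phi(\phi,\psi)$ via Ascoli--Arzel\`a, monotonicity, the order isomorphism $(\w^\w)^\w\times\w^\w\cong\w^\w$) are fine.
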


This theorem will be proved in Section~\ref{s:Ascoli}. Now we discuss the descriptive properties of function spaces on $\Gamma$-quotient spaces.

We say that a topological space $X$ is a {\em quotient} of a topological space $M$   if there exists a surjective quotient map $f:M\to X$. The quotient property of $f$ means that a subset $U\subset X$ is open if and only if its preimage $f^{-1}(U)$ in open in $M$.

A topological space $X$ is called {\em $\Gamma$-quotient} for a Borel class $\Gamma$ if $X$ is a quotient of some space of class $\Gamma$ in a compact metrizable space.

In particular, a {\em $G$-quotient} space is a quotient of a locally compact Polish space and an {\em $F_\sigma$-quotient} space is a quotient of a $\sigma$-compact metrizable space.

We recall that a topological space is {\em Lashnev} if it is the image of a metrizable space under a continuous closed map. It is known (see \cite[11.3]{Grue} or \cite[\S8.2]{Ban}) that separable Lashnev spaces are Fr\'echet-Urysohn $\aleph_0$-spaces.

The following characterization is proved in \cite[7.8.10 and 8.3.1]{Ban}.

\begin{proposition} A (separable Lashnev) Tychonoff space $X$ is universally $\w^\w$-based if (and only if) $X$ is $F_\sigma$-quotient.
\end{proposition}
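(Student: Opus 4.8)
The plan is to prove the two implications separately: the unconditional implication that every $F_\sigma$-quotient Tychonoff space is universally $\w^\w$-based (the ``if''), and the reverse implication under the separable Lashnev hypothesis (the ``only if''). The unifying tool is the correspondence attached to a quotient map $q\colon M\to X$: a function $f\colon X\to\IR$ is continuous iff $f\circ q$ is, so continuous real-valued functions, and hence continuous pseudometrics, on $X$ correspond exactly to the $q$-invariant continuous ones on $M$. Since the universal uniformity of a Tychonoff space is generated by its continuous pseudometrics, this lets me transport uniform structure across $q$ in both directions.

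For ``$F_\sigma$-quotient $\Ra$ universally $\w^\w$-based'', write $X$ as a quotient $q\colon M\to X$ of a $\sigma$-compact metrizable $M=\bigcup_{n\in\w}K_n$ with $K_n$ compact and $K_n\subset K_{n+1}$, and fix a metric $\rho\le 1$ generating the topology of $M$. First I would equip the universal uniformity of $M$ with a monotone $\w^\w$-base by setting, for $\alpha\in\w^\w$, $V_\alpha=\{(a,b):\rho(a,b)<2^{-\alpha(n(a,b))}\}$ where $n(a,b)=\min\{n:a,b\in K_n\}$; then $\alpha\le\beta$ gives $V_\beta\subset V_\alpha$, and uniform continuity of each continuous pseudometric $d$ on every compact $K_n$ yields a modulus $\alpha_d\in\w^\w$ with $V_{\alpha_d}\subset\{(a,b):d(a,b)<1\}$, so $(V_\alpha)$ is a base. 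The second step transports this base along $q$: because the universal uniformity of $X$ is generated precisely by the $q$-invariant continuous pseudometrics on $M$, the monotone family obtained by pushing the $V_\alpha$ through $q\times q$ and closing up under the base axioms (possible within the $\w^\w$-indexing by passing to larger $\alpha$) is a $\w^\w$-base for the universal uniformity of $X$.

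For the converse, suppose $X$ is separable Lashnev and universally $\w^\w$-based. Since a continuous image of a $\sigma$-compact space is $\sigma$-compact, every $F_\sigma$-quotient space is $\sigma$-compact, so the real work is to show $X$ is $\sigma$-compact and then build a $\sigma$-compact metrizable quotient domain. Two facts drive this. First, universal $\w^\w$-basedness passes to closed $C$-embedded subspaces, since continuous pseudometrics extend over them and a monotone $\w^\w$-base restricts to one; as $\w^\w$ carries no monotone $\w^\w$-base on its universal uniformity (by a diagonalization), $X$ contains no closed $C$-embedded copy of $\w^\w$. Second, by Hurewicz's theorem a separable metrizable space is $\sigma$-compact iff it contains no closed copy of $\w^\w$. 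Fixing a continuous closed surjection $q\colon M\to X$ with $M$ separable metrizable, if $X$ were not $\sigma$-compact then neither would $M$ be, so $M$ would contain a closed copy of $\w^\w$; using Vainstein's lemma that each fiber boundary $\partial q^{-1}(x)$ is compact, together with the Fr\'echet-Urysohn property of $X$, I would transport this to a closed copy of $\w^\w$ in $X$, a contradiction. Hence $X=\bigcup_{n\in\w}C_n$ with $C_n$ compact, and the union $N$ of the boundaries $\partial q^{-1}(x)$ over $x\in X$, padded by one point in each boundaryless fiber, is a separable metrizable subspace on which $q$ restricts to a closed surjection onto $X$; compactness of the boundaries together with $\sigma$-compactness of $X$ makes $N$ $\sigma$-compact, so $q|_N$ witnesses that $X$ is $F_\sigma$-quotient.

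The main obstacle is the converse direction, and within it the step of transporting a closed copy of $\w^\w$ from $M$ into $X$, since $q$ may collapse it; controlling this is exactly where the closed-map structure (via Vainstein's lemma) and Fr\'echet-Urysohnness are indispensable, and it is also why the hypothesis cannot be dropped, $\w^\w$ itself being separable metrizable but not $\sigma$-compact. The two remaining technical points are verifying that the pushed-forward family in the first implication genuinely satisfies the uniform base axioms despite $q$ being merely quotient, and checking that the boundary-union $N$ is simultaneously closed, metrizable, $\sigma$-compact, and quotient-inducing.
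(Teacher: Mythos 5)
First, a caveat: the paper does not actually prove this proposition --- it only cites \cite{Ban} (items 7.8.10 and 8.3.1) --- so your attempt has to be judged on its own merits, and on both implications it has genuine gaps. In the forward direction, the family $V_\alpha=\{(a,b):\rho(a,b)<2^{-\alpha(n(a,b))}\}$ with $n(a,b)=\min\{n:a,b\in K_n\}$ is not a base of the universal uniformity of $M$, because for unbounded $\alpha$ the set $V_\alpha$ is not even a neighborhood of the diagonal and hence not an entourage of any compatible uniformity. Take $M=\mathbb Q\cap[0,1]$ with any exhaustion by compact sets $K_n$: each $K_n$ is nowhere dense, so arbitrarily close to a fixed point $x$ there are points $y$ whose level $\max\{\mathrm{lev}(x),\mathrm{lev}(y)\}$ is so high that $\rho(x,y)>2^{-\alpha(n(x,y))}$; these pairs converge to $(x,x)$ but avoid $V_\alpha$. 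The correct entourages are of the form $\bigcup_{n}\bigcup_{x\in K_n\setminus K_{n-1}}B(x;2^{-\alpha(n)})\times B(x;2^{-\alpha(n)})$, which demand a common witness ball rather than a pairwise distance bound and are open neighborhoods of the diagonal, hence entourages of the fine uniformity of the paracompact space $M$. More seriously, even after this repair the step ``push the $V_\alpha$ through $q\times q$ and close up under the base axioms'' is where the entire content of the implication sits, and you give no argument for it: the universal uniformity of $X$ corresponds to the uniformity on $M$ generated by the $q$-invariant continuous pseudometrics, which is in general strictly coarser than the fine uniformity of $M$, and an $\w^\w$-base of a finer uniformity does not induce one on a coarser uniformity. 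Concretely, $(q\times q)(V_\alpha)$ need not be a neighborhood of the diagonal of $X$, since $q\times q$ is neither open nor quotient. Controlling this descent is exactly the hard part of Banakh's proof, and it is the same difficulty that Lemma~\ref{l:1=>2} of this paper has to confront (for functions rather than pseudometrics) via the explicit parametrization by the Polish set $P$.

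In the converse direction you invoke Hurewicz's dichotomy (``not $\sigma$-compact implies a closed copy of $\w^\w$'') for the metrizable domain $M$ of the closed map $q:M\to X$, but that dichotomy holds for Polish (more generally analytic) spaces and fails for arbitrary separable metrizable ones: a non-$\sigma$-compact set of reals with the Hurewicz property contains no closed copy of $\w^\w$. A Lashnev space is by definition a closed image of a metrizable space, which you cannot assume to be completely metrizable, so this step does not go through as stated. You also assert by ``a diagonalization'' that the universal uniformity of $\w^\w$ has no $\w^\w$-base; this is true but is itself a nontrivial theorem that would need a proof. The endgame --- Vainstein's lemma and restriction of $q$ to the union of the fiber boundaries, so that a perfect map onto the $\sigma$-compact $X$ yields a $\sigma$-compact metrizable quotient domain --- is sound once $\sigma$-compactness of $X$ is established, but a route to $\sigma$-compactness that actually works (and is the one the paper itself uses in the analogous Proposition~\ref{p:quot}) avoids Hurewicz entirely: the universal uniformity is $\IR$-universal, so a separable universally $\w^\w$-based $X$ has Suslin $C_k(X)$ by Theorem~\ref{t:Ascoli}, and then Calbrix's Theorem~\ref{t:Calbrix} gives that $X$ is $\sigma$-compact.
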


We recall that a Tychonoff space is universally $\w^\w$-based if its universal uniformity of $X$ has an $\w^\w$-base.

Therefore, for any Tychonoff space $X$ we have the implications:
$$\mbox{$G$-quotient $\Ra$ $F_\sigma$-quotient $\Ra$ universally $\w^\w$-based},$$
where the last implication can be reversed for separable Lashnev spaces.

 The following theorem is the main result of this paper.


\begin{theorem}\label{t:main}
For a Tychonoff space $X$ consider the following statements:
\begin{enumerate}
\item $X$ is $G$-quotient;
\item $C_k(X)$ is Polish;
\item $\dCF(X)$ is $C_\sigma$-Lusin;
\smallskip
\item $X$ is $F_\sigma$-quotient;
\item $C_p(X)$, $C_k(X)$ are $F_\sigma$-Lusin and $C_{\downarrow F}(X)$ is $C_{\delta\sigma}$-Lusin;
\smallskip
\item $C_p(X)$ is Lusin and $X$ is an $\aleph_0$-space;
\item $C_k(X)$ is Lusin;
\item $C_{\downarrow F}(X)$ is Lusin;
\smallskip
\item $C_p(X)$ is Suslin and $X$ is an $\aleph_0$-space;
\item $C_k(X)$ is Suslin;
\item $C_{\downarrow F}(X)$ is Suslin;
\smallskip
\item $X$ is $\sigma$-compact.
\end{enumerate}
Then $(3)\Leftarrow(2)\Leftrightarrow(1)\Ra(4)\Ra(5)\Ra(6)\Leftrightarrow(7)\Leftrightarrow(8)\Ra(9)\Leftrightarrow(10)\Leftrightarrow(11)\Ra(12)$.
\end{theorem}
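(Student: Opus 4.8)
The plan is to exploit the fact that $C_p(X)$, $C_k(X)$ and $\dCF(X)$ share the same underlying set $C(X)$, so that the three topologies can be compared by \emph{identity maps}. The identity $C_k(X)\to C_p(X)$ is continuous (each point is compact, so the $C_p$-subbases $\{f:f(x)>r\}$, $\{f:f(x)<r\}$ are $C_k$-open), and the identity $C_k(X)\to\dCF(X)$ is continuous as well, since $\lceil K;r\rceil$ is a $C_k$-subbasic set and $\lceil U;r\rfloor=\bigcup_{x\in U}\{f:f(x)>r\}$ is $C_p$-open, hence $C_k$-open. Both are \emph{continuous bijections}. This immediately yields the ``easy'' directions of the two blocks: from $C_k(X)$ Suslin (resp. Lusin) one gets $C_p(X)$ and $\dCF(X)$ Suslin (resp. Lusin) as continuous (bijective) images, and $X$ is an $\aleph_0$-space because $C_k(X)$ is cosmic (Theorem~\ref{t:k}). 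The whole difficulty is concentrated in the \emph{reverse} arrows, where one must \emph{recover} the descriptive complexity of the finer compact-open topology from that of the coarser pointwise or Fell topology.

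For these reverse directions I would use two reusable principles. First, a map $g$ into a cosmic space $Y$ with countable network $\{N_i\}$ is Borel as soon as every preimage $g^{-1}(N_i)$ is Borel, because $g^{-1}(V)=\bigcup\{g^{-1}(N_i):N_i\subseteq V\}$ is then a \emph{countable} union of Borel sets for each open $V$. Second, the classical fact that countably many Borel subsets of a Polish space can be made open (clopen) in a finer Polish topology. Combining these: I would show that the identity $C_p(X)\to C_k(X)$ is Borel by checking it on a Michael-type countable network of $C_k(X)$ built from a countable $k$-network of $X$; a typical network element $\{f:f(N)\subseteq(a,b)\}$ equals $\bigcup_k\bigcap_{x\in D_N}\{f:f(x)\le b-\tfrac1k\}\cap\bigcup_k\bigcap_{x\in D_N}\{f:f(x)\ge a+\tfrac1k\}$ for a countable dense $D_N\subseteq N$, hence is $F_\sigma$ in $C_p(X)$; likewise $\lfloor K;r\rfloor=\{f:\min f(K)>r\}=\bigcup_n\bigcap_{x\in D}\{f:f(x)\ge r+\tfrac1n\}$ is $F_\sigma$ in $C_p(X)$ for a countable dense $D\subseteq K$. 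Given a continuous (bijective) map $h\colon P\to C_p(X)$ witnessing the Suslin (Lusin) property, the composite $h\colon P\to C_k(X)$ is then Borel; refining the Polish topology of $P$ so that the countably many preimages of network elements become open makes it continuous (and still bijective if $h$ was), so $C_k(X)$ is Suslin (Lusin). The same scheme, with the dual computation showing $\lfloor K;r\rfloor$ is $\dCF$-Borel (using the sup-subbases $\lceil U;r\rfloor$ and weak local compactness from Theorem~\ref{regular}), gives $(8)\Ra(7)$ and $(11)\Ra(10)$. This delivers both equivalences $(6)\Leftrightarrow(7)\Leftrightarrow(8)$ and $(9)\Leftrightarrow(10)\Leftrightarrow(11)$.

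For the endpoints I would argue as follows. For $(1)\Leftrightarrow(2)$: if $M$ is locally compact Polish then it is hemicompact metrizable, so $C_k(M)$ is a separable completely metrizable (Fréchet) space, i.e.\ Polish; a quotient map $q\colon M\to X$ induces the closed embedding $q^*\colon C_k(X)\to C_k(M)$, $f\mapsto f\circ q$, whose image $\{g:g$ is constant on the fibres of $q\}$ is closed, so $C_k(X)$ is Polish. Conversely, $C_k(X)$ Polish forces $X$ to be a cosmic hemicompact $k_\IR$-space, and I would realize $X$ as the quotient of the locally compact Polish sum $\bigsqcup_n K_n$ of a cofinal sequence of (metrizable) compacta. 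The trivial $(1)\Ra(4)$ is immediate since a locally compact Polish space is $\sigma$-compact metrizable. For $(4)\Ra(5)$ and $(2)\Ra(3)$, I would again push the three function spaces through the quotient map into the corresponding function spaces over a $\sigma$-compact metrizable (resp.\ locally compact Polish) space $M$, establish the precise Lusin classes of $C_p(M)$, $C_k(M)$ and $\dCF(M)$ there (via condensation onto a Borel subset of $\IR^\w$ and the $F_\sigma$/$C_\sigma$-bookkeeping of $\min$/$\max$-on-compacta conditions), and use that closed subspaces inherit the $\Gamma$-Lusin property: if $Y=f(P)$ with $f^{-1}$ $\Gamma$-measurable and $Z\subseteq Y$ is closed, then $f^{-1}(Z)$ is a closed (hence Polish) subset of $P$ mapping bijectively and $\Gamma$-measurably onto $Z$. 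The purely hierarchical implication $(5)\Ra(6)$ follows from ``$F_\sigma$-Lusin $\Ra$ Lusin'' together with Theorem~\ref{t:k}, and $(11)\Ra(12)$ follows from the block equivalence via $(9)$ and Calbrix's Theorem~\ref{t:Calbrix}.

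The hard part will be the reverse block arrows, and specifically two points inside them. The first is pinning down the \emph{exact} additive Borel class of the compact-open datum $\lfloor K;r\rfloor$ inside the Fell and pointwise topologies, since this is what decides whether one lands in $C_\sigma$-Lusin (statement $(3)$) versus merely $C_{\delta\sigma}$-Lusin (statements $(5)$ and $(10)$): expressing ``$f>r$ on all of $K$'' through the sup-type Fell subbases $\lceil U;r\rfloor$ requires covering $K$ and costs one extra layer of the hierarchy, which is precisely why $\dCF$ sits one class lower than $C_p$ and $C_k$. The second subtlety is that $C_k(X)$ need not be metrizable (it is only cosmic when $X$ is a non-hemicompact $\aleph_0$-space such as $\mathbb Q$), so the topology-refinement step cannot be applied to an uncountable subbase; this is exactly where the countable-network criterion above is essential, reducing the refinement to countably many Borel preimages. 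Everything else — the continuity of the comparison maps, the closure of the classes $F_\sigma$ and $C_\sigma$ under finite intersection and countable union, and the heredity under closed subspaces — is routine bookkeeping.
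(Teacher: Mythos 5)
Your treatment of the two blocks of equivalences $(6)\Leftrightarrow(7)\Leftrightarrow(8)$ and $(9)\Leftrightarrow(10)\Leftrightarrow(11)$ is essentially the paper's route: continuity of the identities $C_k(X)\to C_p(X)$, $C_k(X)\to\dCF(X)$ in one direction, and in the other direction Borel measurability of the reverse identities checked on the standard subbases (using countable dense subsets of compacta and hereditary Lindel\"ofness of the cosmic target), followed by the Kechris-style refinement of the Polish topology making countably many Borel preimages open. These are exactly the paper's Lemmas on the identity maps together with its Theorems on Borel images of Lusin and Suslin cosmic spaces, and your computations of $\lfloor K;r\rfloor$ and $\lceil K;r\rceil$ as $F_\sigma$-sets in $C_p(X)$ match Lemma~\ref{l:p->k}. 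The endpoints $(1)\Leftrightarrow(2)$, $(1)\Ra(4)$, $(5)\Ra(6)$ and $(10)\Ra(12)$ are also handled in substance as in the paper (your closed-embedding argument for $(1)\Ra(2)$ is a legitimate variant, though you should justify that $q^*\colon C_k(X)\to C_k(M)$ is an embedding, which needs the fact that every compact subset of $X$ lifts to a compact subset of the locally compact $M$).

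The genuine gap is $(4)\Ra(5)$, which is the technical core of the paper (its Lemma~\ref{l:1=>2}) and which your proposal compresses into ``condensation onto a Borel subset of $\IR^\w$ and $F_\sigma$/$C_\sigma$-bookkeeping.'' Two things go wrong. First, the proposed reduction to $X=M$ via a closed embedding $q^*\colon C_k(X)\to C_k(M)$ is unjustified when $M$ is $\sigma$-compact but not locally compact: a compact subset of $X$ need then not be the image of a compact subset of $M$ (the paper's Claim~\ref{cl:union} only yields a cover by images of finitely many open sets), so the compact-open topology of $C_k(X)$ may be strictly finer than the topology induced from $C_k(M)$. Second, and more seriously, a continuous condensation of $C_p(M)$ onto the set $\{f\in\IR^D: f\text{ extends continuously over }M\}$ only shows that $C_p(M)$ is \emph{Lusin} (that set is of a higher Borel class, roughly $F_{\sigma\delta}$ over the compact pieces, and is cut out by uncountably many oscillation conditions); it does not produce a Polish domain whose inverse map is \emph{$F_\sigma$-measurable}, which is what statement $(5)$ asserts. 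The paper achieves this by adjoining an explicit modulus-of-continuity parameter $\alpha\in\w^\w$ and forming the Polish space $P\subset\w^\w\times\IR^D$ of pairs $(\alpha,f)$ with $\alpha$ a minimal modulus for $f$ compatible with the fibres of $q$; minimality is what makes the parametrization bijective, and it is precisely this choice that makes the images of the subbasic sets of $P$ open or closed in $C_p(X)$ (hence $F_\sigma$), respectively $C_\sigma$ or $C_\delta$ in $\dCF(X)$ (hence $C_{\delta\sigma}$). Without some such device your argument cannot distinguish $F_\sigma$-Lusin from Lusin, so the implication $(4)\Ra(5)$ — and with it the strongest part of the theorem — is not established by the proposal.
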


Theorem~\ref{t:main} will be proved in Section~\ref{s:proof-main} after some preliminary work done in Sections~\ref{s:LS}, \ref{s:id} and \ref{s:Fs-quotient}. This theorem implies the following characterization.

\begin{corollary}\label{c:main}
For a metrizable space $X$ the following statements are equivalent:
\begin{enumerate}
\item $X$ is $\sigma$-compact;
\smallskip
\item $C_p(X)$ is $F_\sigma$-Lusin;
\item $C_k(X)$ is $F_\sigma$-Lusin;
\item $C_{\downarrow F}(X)$ is $C_{\delta\sigma}$-Lusin;
\smallskip
\item $C_p(X)$ is Lusin;
\item $C_k(X)$ is Lusin;
\item $C_{\downarrow F}(X)$ is Lusin;
\smallskip
\item $C_p(X)$ is Suslin;
\item $C_k(X)$ is Suslin;
\item $C_{\downarrow F}(X)$ is Suslin.
\end{enumerate}
\end{corollary}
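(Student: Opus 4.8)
The plan is to deduce everything from Theorem~\ref{t:main} (applied to our metrizable, hence Tychonoff, space $X$), together with Calbrix's Theorem~\ref{t:Calbrix} and the elementary implications among the Borel--Lusin classes recorded in the introduction. First I would match the ten statements of the corollary with statements of the theorem: conditions (2), (3), (4) of the corollary are exactly the three assertions packaged in Theorem~\ref{t:main}(5); corollary~(6), (7) are Theorem~\ref{t:main}(7), (8); and corollary~(9), (10) are Theorem~\ref{t:main}(10), (11). The only genuine mismatch is that Theorem~\ref{t:main}(6) and (9) carry an extra rider ``$X$ is an $\aleph_0$-space'' that the corollary's pointwise statements (5) and (8) omit, and the whole point of the bookkeeping is to organize the implications into a single cycle that bypasses this rider rather than verifying it.

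For the forward direction I would first establish (1)$\Ra$(2)$\wedge$(3)$\wedge$(4). The key observation here is that a $\sigma$-compact \emph{metrizable} space is $F_\sigma$-quotient: by the reformulation recalled in the introduction, an $F_\sigma$-quotient space is precisely a quotient of a $\sigma$-compact metrizable space, so the identity map already witnesses that such an $X$ is $F_\sigma$-quotient. Thus (1) yields Theorem~\ref{t:main}(4), and the implication (4)$\Ra$(5) of that theorem delivers at once that $C_p(X)$ and $C_k(X)$ are $F_\sigma$-Lusin and $\dCF(X)$ is $C_{\delta\sigma}$-Lusin, i.e.\ corollary~(2), (3), (4). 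Applying within each function space the implications $F_\sigma$-Lusin $\Ra$ Lusin $\Ra$ Suslin and $C_{\delta\sigma}$-Lusin $\Ra$ Lusin $\Ra$ Suslin from the introduction, I obtain (2)$\Ra$(5)$\Ra$(8), (3)$\Ra$(6)$\Ra$(9), and (4)$\Ra$(7)$\Ra$(10). Hence (1) implies every remaining statement.

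To close the cycle I would show that each of the three Suslin conditions returns to (1). The compact-open and Fell cases are immediate: corollary~(9) is Theorem~\ref{t:main}(10), and (10)$\Ra$(12) of that theorem gives $\sigma$-compactness; likewise corollary~(10) is Theorem~\ref{t:main}(11), and (11)$\Ra$(12) gives the same conclusion. For the pointwise case I would invoke Calbrix's Theorem~\ref{t:Calbrix} directly: if $C_p(X)$ is Suslin then $X$ is $\sigma$-compact, with no $\aleph_0$-space hypothesis required. Since every one of (2)--(10) implies one of the Suslin conditions (8), (9), (10) via the descents above, this shows each statement implies (1); combined with the forward direction, all ten become equivalent. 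The only point demanding care---rather than a real obstacle---is this use of Calbrix to handle $C_p(X)$ without the extra hypothesis attached to Theorem~\ref{t:main}(9); alternatively one could argue that for metrizable $X$ a Suslin function space has a countable network, whence (via Theorems~\ref{t:Cp-cosmic} and \ref{t:k}) $X$ is cosmic and therefore separable, so the $\aleph_0$-rider is automatically met and Theorem~\ref{t:main}(9)$\Ra$(12) applies.
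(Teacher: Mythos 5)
Your proof is correct and follows exactly the route the paper intends (the paper leaves the deduction implicit, stating only that Theorem~\ref{t:main} implies the corollary): a $\sigma$-compact metrizable space is $F_\sigma$-quotient, so Theorem~\ref{t:main}(4)$\Rightarrow$(5) and the elementary Lusin/Suslin implications give (1)$\Rightarrow$(2)--(10), while the return to (1) uses Theorem~\ref{t:main}(10)(11)$\Rightarrow$(12) and Calbrix for the $C_p$ case. Your observation that the $\aleph_0$-space rider is automatic for metrizable $X$ (cosmic metrizable $=$ separable metrizable) is the right way to dispose of the only apparent mismatch.
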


Theorem~\ref{t:main}(3), Corollary~\ref{c:main}(3) (and Theorem~\ref{regular}) suggest the following open problem.

\begin{problem} Is the function space $\dCF(X)$ \ $F_\sigma$-Lusin for any metrizable $\sigma$-compact (weakly locally compact) space $X$?
\end{problem}

The implications of  Theorems~\ref{t:main}, \ref{t:Calbrix} and \ref{t:Ascoli} are represented in the following diagram holding for any Tychonoff space $X$.
$$
\xymatrix{
\mbox{$C_k(X)$ is Polish}\ar@{<=>}[r]\ar@{=>}[dd]&\mbox{$X$ is $G$-quotient}\ar@{=>}[r]\ar@{=>}[d]&\mbox{$\dCF(X)$ is $C_\sigma$-Lusin}\ar@{=>}[dd]\\
&\mbox{$X$ is $F_\sigma$-quotient}\ar@{=>}[rd]\ar@{=>}[d]\ar@{=>}[ld]\\
\mbox{$C_k(X)$ is $F_\sigma$-Lusin}\ar@{=>}[d]&\mbox{$C_p(X)$ is $F_\sigma$-Lusin and}\atop\mbox{$X$ is an $\aleph_0$-space}\ar@{=>}[d]&\mbox{$\dCF(X)$ is $C_{\delta\sigma}$-Lusin}\ar@{=>}[d]\\
\mbox{$C_k(X)$ is Lusin}\ar@{<=>}[r]\ar@{=>}[d]&\mbox{$C_p(X)$ is Lusin and}\atop\mbox{$X$ is an $\aleph_0$-space}\ar@{=>}[d]&\mbox{$\dCF(X)$ is Lusin}\ar@{<=>}[l]\ar@{=>}[d]\\
\mbox{$C_k(X)$ is Suslin}\ar@{<=>}[r]&\mbox{$C_p(X)$ is Suslin and}\atop\mbox{$X$ is an $\aleph_0$-space}\ar@{=>}[d]\ar^{+Ascoli}[ld]&\mbox{$\dCF(X)$ is Suslin}\ar@{<=>}[l]\\
\mbox{$X$ is separable and has an}\atop\mbox{$\IR$-universal $\w^\w$-based uniformity}\ar@{=>}[u]&\mbox{$C_p(X)$ is Suslin}\ar@{=>}[d]\\
&\mbox{$X$ is $\sigma$-compact}\\
}$$

The following proposition shows that the last implication in the diagram (established by Calbrix's Theorem~\ref{t:Calbrix}) cannot be reversed even for countable Lashnev (and hence $\aleph_0$-spaces) with a unique non-isolated point. This proposition also implies that Theorem 5.7.4 in \cite{McNbook} is incorrect (that theorem claims that for any sequential $\sigma$-compact $\aleph_0$-space $X$ the function space $C_k(X)$ is Suslin).

\begin{proposition}\label{p:quot} Let $X=M/A$ be the quotient space of a metrizable space $M$ by a closed nowhere dense subset $A\subset M$. If the function space $C_k(X)$ is Suslin, then the space $M$ is $\sigma$-compact and hence $X$ is an $F_\sigma$-quotient space.
\end{proposition}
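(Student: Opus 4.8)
The plan is to split the $\sigma$-compactness of $M$ into the two claims that $M\setminus A$ and $A$ are each $\sigma$-compact, to prove the first by a soft argument from the $\sigma$-compactness of $X$, and to extract the second (the real content) from the local uniform structure of the quotient point.

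First I record the consequences of the hypothesis. Collapsing the closed set $A$ is a closed surjection $q:M\to X=M/A$, so $X$ is a separable Lashnev space: indeed, $C_k(X)$ Suslin has a countable network, so by Theorem~\ref{t:k} the space $X$ is an $\aleph_0$-space, in particular cosmic and separable. Being a separable Lashnev space, $X$ is therefore a Fr\'echet--Urysohn $\aleph_0$-space, hence a $k_\IR$-space, hence Ascoli. Moreover the implication $(10)\Ra(12)$ of Theorem~\ref{t:main} gives that $X$ is $\sigma$-compact. Write $*:=q(A)\in X$; then $q$ restricts to a homeomorphism of $M\setminus A$ onto the open subspace $X\setminus\{*\}$. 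To see that $M\setminus A$ is $\sigma$-compact, write $X=\bigcup_{n\in\w}K_n$ with each $K_n$ compact. Since $X$ is cosmic, each compact $K_n$ has a countable network and so is metrizable; hence $K_n\setminus\{*\}$ is locally compact and second countable, so $\sigma$-compact, whence $X\setminus\{*\}=\bigcup_{n\in\w}(K_n\setminus\{*\})$ and its homeomorphic copy $M\setminus A$ are $\sigma$-compact. As $A$ is nowhere dense, $M\setminus A$ is dense, so $M$ is separable and $A$ is separable metrizable. Consequently it suffices to prove that $A$ is $\sigma$-compact: then $M=(M\setminus A)\cup A$ is a union of two $\sigma$-compact subspaces, hence $\sigma$-compact, and $X=M/A$ is then a quotient of a $\sigma$-compact metrizable space, i.e.\ an $F_\sigma$-quotient space, as required.

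The central step is the $\sigma$-compactness of $A$, and here one must use the Suslin hypothesis in full rather than just the $\sigma$-compactness of $X$ (the latter is insufficient, since collapsing \emph{any} closed $A$ with $M\setminus A$ $\sigma$-compact already yields a $\sigma$-compact cosmic $X$, regardless of whether $A$ itself is $\sigma$-compact). Since $X$ is Ascoli and $C_k(X)$ is Suslin, the ``only if'' part of Theorem~\ref{t:Ascoli} supplies a compatible $\w^\w$-based uniformity $(U_\alpha)_{\alpha\in\w^\w}$ on $X$. For the point $*$ the sets $U_\alpha[*]$ then form a neighbourhood base at $*$ that is monotone, $U_\beta[*]\subset U_\alpha[*]$ whenever $\alpha\le\beta$; that is, $*$ has a local $\w^\w$-base. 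Every open set $O\supset A$ in $M$ is saturated (as $q^{-1}(q(O))=O\cup A=O$), so $q(O)$ is an open neighbourhood of $*$; pulling the above base back along $q$ and putting $W_\alpha:=q^{-1}(U_\alpha[*])$ yields a monotone family $(W_\alpha)_{\alpha\in\w^\w}$ of open neighbourhoods of $A$ in $M$ which is a base of the neighbourhood filter of $A$, in the sense that every open $O\supset A$ contains some $W_\alpha$.

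It remains to deduce $\sigma$-compactness of $A$ from the existence of such a monotone $\w^\w$-indexed neighbourhood base of the closed set $A$ in the metrizable space $M$, and this is the step I expect to be the main obstacle. The point is that, for the separable Lashnev (indeed metrizable) space $A$, the cited characterisation of universally $\w^\w$-based separable Lashnev spaces as exactly the $F_\sigma$-quotient spaces (the proposition from \cite{Ban} quoted before Theorem~\ref{t:main}) together with the observation that a metrizable space is $F_\sigma$-quotient if and only if it is $\sigma$-compact (a quotient, hence continuous image, of a $\sigma$-compact space is $\sigma$-compact) reduces the problem to showing that the monotone $\w^\w$-base $(W_\alpha)$ of neighbourhoods of $A$ encodes that $A$ is universally $\w^\w$-based. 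The mechanism is a domination argument: one converts $(W_\alpha)$ into an $\w^\w$-indexed monotone family of entourages on $A$ (via distance-type functions $d(\,\cdot\,,M\setminus W_\alpha){\restriction}A$ and a collar around $A$) and checks cofinality against the continuous pseudometrics on $A$. The delicate part, which genuinely uses the combinatorics of $\w^\w$ from \cite{Ban} and fails precisely when $A$ contains a closed copy of $\w^\w$ (a Hurewicz-type dichotomy for the separable metrizable $A$), is establishing this cofinality, i.e.\ the equivalence between the monotone $\w^\w$-indexed neighbourhood base of $A$ and the $\sigma$-compactness of $A$. Granting this, $A$ is $\sigma$-compact and the proof concludes as in the second paragraph.
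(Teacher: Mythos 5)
Your reduction is sound up to the decisive point, and it matches the paper's strategy: establish that $X$ is Ascoli and cosmic, get $\sigma$-compactness of $X$ from Calbrix (hence of the open subspace $X\setminus\{*\}\cong M\setminus A$), extract an $\w^\w$-based uniformity on $X$ from Theorem~\ref{t:Ascoli}, and reduce everything to the $\sigma$-compactness of $A$. But at that decisive point you stop: you explicitly write ``Granting this'' for the implication
\emph{(monotone $\w^\w$-indexed neighbourhood base of the closed set $A$ in the metrizable space $M$) $\Ra$ ($A$ is $\sigma$-compact)},
and this is precisely the nontrivial content of the proposition. The paper does not prove it either, but it discharges it by citing a specific external result, Corollary 8.2.3 of \cite{Ban}, which states exactly that if the topology of $M/A$ is generated by an $\w^\w$-based uniformity then $A$ is $\sigma$-compact. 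Without that citation (or a proof), your argument has a genuine gap at its core.

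Moreover, the route you sketch for filling the gap is doubtful. You propose to show that $A$ is universally $\w^\w$-based and then invoke the quoted proposition from \cite{Ban} plus a ``Hurewicz-type dichotomy for the separable metrizable $A$'' (either $A$ is $\sigma$-compact or it contains a closed copy of $\w^\w$). The Hurewicz dichotomy is a theorem of ZFC only for \emph{analytic} (more generally, suitably definable) spaces, and nothing in your argument makes $A$ analytic: $A$ is merely a closed nowhere dense subset of the separable metrizable space $M$, and a priori $A$ can be an essentially arbitrary separable metrizable space. Under CH-type hypotheses there are non-$\sigma$-compact separable metrizable spaces containing no closed copy of $\w^\w$, so a dichotomy argument cannot close the gap in ZFC. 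Whatever combinatorial mechanism makes Banakh's Corollary 8.2.3 work, it is not the one you describe; you need either to cite that corollary or to give a direct argument (e.g.\ showing that a monotone $\w^\w$-indexed neighbourhood base of $A$ forces each of countably many pieces of $A$ to be totally bounded in a complete metric), neither of which appears in your proposal.
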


\begin{proof}  The quotient space $X=M/A$ of the metrizable space $M$ is a sequential Tychonoff space and hence Ascoli. If the function space $C_k(X)$ is Suslin, then $X$ cosmic by Theorem~\ref{t:Cp-cosmic}, and by Theorem~\ref{t:Ascoli}, the topology of $X$ is generated by some $\w^\w$-based uniformity. By Corollary 8.2.3 \cite{Ban}, the set $A$ is $\sigma$-compact. By Theorem~\ref{t:Calbrix}, the cosmic space $X$ is $\sigma$-compact and so is its open subspace $X\setminus\{A\}$, which is homeomorphic to $M\setminus A$. Then $M=A\cup(M\setminus A)$ is $\sigma$-compact and hence $X$ is $F_\sigma$-quotient.
\end{proof}

\begin{example}\label{ex}
Let $\w^{<\w}=\bigcup_{n\in\w}\w^n$ be the family of all functions $x:n\to\w$ defined on finite ordinals $n\in\w$.
Let $\w^{\le\w}=\w^{<\w}\cup\w^\w$. For any function $x\in\w^{\le \w}$ defined on an ordinal $n\le \w$ and any ordinal $k\le\w$ denote by $x{\restriction}k$ the restriction of $x$ to the ordinal $k\cap n=\min\{n,k\}$.
The set $\w^{\le\w}$ carries a natural partial order $\le$ defined by $x\le y$ iff there exists an ordinal $n\le\w$ such that $x=y{\restriction}n$.
The space $\w^{\le\w}$ is endowed with the topology $\tau$ generated by the countable base consisting of the sets ${\uparrow}x=\{y\in\w^{\le\w}:x\le y\}$ where $x\in\w^{<\w}$. It is easy to see that $(\w^{\le\w},\tau)$ is a Polish space, $\w^{<\w}$ is a dense discrete subspace in $\w^{\le\w}$ and $\w^\w$ is a closed nowhere dense subset in $\w^{\le\w}$.
Consider the quotient space $X=\omega^{\leq\omega}/{\omega^\omega}$ and observe that $X$ is a countable sequential $\aleph_0$-space with a unique non-isolated point. Since the space $\w^{\le\w}$ is not $\sigma$-compact,  the function spaces $C_p(X)$, $C_k(X)$ and $\dCF(X)$ are not Suslin according to Proposition~\ref{p:quot}. In Section~\ref{s:ex} we shall present an alternative self-contained proof of this fact.
\end{example}

Observe that the space $X=M/A$ in Proposition~\ref{p:quot} is Lashnev, i.e., the image of a metrizable space under a closed continuous map.

\begin{problem} Assume that a Tychonoff space $X$ is Lashnev and its function space $C_k(X)$ is Suslin. Is $X$ an $F_\sigma$-quotient space?
\end{problem}


\section{The Suslin property of the function spaces on Ascoli spaces}\label{s:Ascoli}

In this section we shall prove Theorem~\ref{t:Ascoli}. To prove the ``if'' part of this theorem, assume that a Tychonoff space $X$ is separable and has an $\IR$-universal $\w^\w$-based uniformity. By Theorem 7.5.1(18) of \cite{Ban}, the function space $C_k(X)$ is Suslin.
\smallskip

To prove the ``only if'' part of Theorem~\ref{t:Ascoli}, assume that the space $X$ is Ascoli and the function space $C_k(X)$ is Suslin. By Theorem~\ref{t:k}, the space $X$ is separable. By the definition of an Ascoli space, the canonical map $\delta:X\to C_k(C_k(X))$ is continuous. Since the space $C_k(X)$ is Suslin, there exists a continuous surjective map $\xi:\w^\IN\to C_k(X)$. Here $\IN=\w\setminus\{0\}$. For any $\alpha\in\w^\IN$ the subspace ${\downarrow}\alpha=\{\beta\in\w^\IN:\beta\le\alpha\}$ of $\w^\IN$ is compact and so is its continuous image $K_\alpha=\xi({\downarrow}\alpha)\subset C_k(X)$.  Observe that $K_\alpha\subset K_\beta$ for any $\alpha\le\beta$ in $\w^\w$.

For every $\alpha\in\w^\w$ consider the entourage
$$U_\alpha=\{(x,y)\in X\times X:\sup_{f\in K_{\alpha{\restriction}\IN}}|f(x)-f(y)|<2^{-\alpha(0)}\}$$
of the diagonal in $X\times X$. The continuity of the map $\delta:X\to C_k(C_k(X))$ implies that $U_\alpha$ is an open neighborhood of the diagonal in $X\times X$. It is easy to see that $(U_\alpha)_{\alpha\in\w^\w}$ is an $\w^\w$-base of some uniformity $\U$ on $X$. To see that this uniformity in $\IR$-universal, we need to show that every function $f\in C(X)$ is uniformly continuous with respect to the uniformity $\U$. Given any $\e>0$, find $\alpha\in\w^\w$ such that $2^{-\alpha(0)}<\e$ and $f=\xi(\alpha{\restriction}\IN)$. Then for any pair $(x,y)\in U_\alpha\in\U$, we get $$|f(x)-f(y)|\le\sup_{g\in K_{\alpha{\restriction}\IN}}|g(x)-g(y)|<2^{-\alpha(0)}<\e,$$
which means that $f$ is uniformly continuous.

\section{Some results on Lusin and Suslin spaces}\label{s:LS}

\begin{theorem}\label{t:Lim} Each $F_\sigma$-Lusin subspace $X$ of a Hausdorff topological space $Y$ is a $C_{\sigma\delta}$-set in $Y$. More precisely, $X=A\cap B$ for some $G_\delta$-set $A\subset Y$ and some $F_{\sigma\delta}$-set $B\subset Y$.
\end{theorem}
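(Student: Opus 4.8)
The plan is to realize $X$ as the intersection of a completeness-type $G_\delta$ set $A$ and an $F_{\sigma\delta}$ set $B$ in $Y$, exploiting the finer Polish topology supplied by the $F_\sigma$-Lusin structure. First I would unpack the hypothesis: fix a Polish space $P$ and a continuous bijection $f\colon P\to X$ whose inverse is $F_\sigma$-measurable. Transporting the topology of $P$ along $f$, I regard $X$ as carrying two topologies on one underlying set — the subspace topology $\tau$ inherited from $Y$ and a finer Polish topology $\tau'\supseteq\tau$ (the image of $P$), the identity $(X,\tau')\to(X,\tau)$ being $f$. The measurability hypothesis then reads: every $\tau'$-open set is $F_\sigma$ in $(X,\tau)$. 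Fixing a complete metric $d\le 1$ for $\tau'$, I build a Lusin scheme $(V_s)_{s\in\w^{<\w}}$ on the Polish space $(X,\tau')$: $V_\varnothing=X$; for each $s$ the family $\{V_{s^\frown j}\}_{j\in\w}$ is a $\tau'$-open cover of $V_s$ with $\overline{V_{s^\frown j}}^{\tau'}\subseteq V_s$; and $\diam_d V_s\le 2^{-|s|}$. Each $V_s$ is $\tau'$-open, hence $F_\sigma$ in $(X,\tau)$; its $Y$-closure $\overline{V_s}^{Y}$ is of course closed, with $\overline{V_s}^{Y}\cap X=\overline{V_s}^{\tau}$, a fact I will use on both sides below.

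For the $F_{\sigma\delta}$ factor I would take
$$B:=\bigcap_{n\in\w}\ \bigcup_{|s|=n}\overline{V_s}^{Y},$$
which is $F_{\sigma\delta}$ since each $\overline{V_s}^{Y}$ is closed, and check $X\subseteq B$: for $x\in X$ the covering property of the scheme yields, at each level $n$, a piece $V_s\ni x$, so $x\in\overline{V_s}^{Y}$. The heart of the matter is the reverse inclusion modulo a $G_\delta$ set, and the key \emph{pinning} observation is the following. If $y\in\bigcap_n\overline{V_{\beta\restriction n}}^{Y}$ for some branch $\beta\in\w^\w$ of the scheme, then the pieces $V_{\beta\restriction n}$ nest (by $\overline{V_{s^\frown j}}^{\tau'}\subseteq V_s$) with $\diam_d\to 0$, so by completeness $\bigcap_n\overline{V_{\beta\restriction n}}^{\tau'}$ is a single point $p\in X$; these pieces form a $\tau'$-neighbourhood base at $p$, and the inclusion $(X,\tau')\hookrightarrow Y$ is continuous, so any $Y$-open $U'\ni p$ contains some $V_{\beta\restriction N}$. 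If $y\ne p$, Hausdorffness gives disjoint $Y$-open $U\ni y$, $U'\ni p$; then $\overline{V_{\beta\restriction N}}^{Y}\subseteq\overline{U'}^{Y}$, which is disjoint from $U\ni y$, contradicting $y\in\overline{V_{\beta\restriction N}}^{Y}$. Hence $y=p\in X$, and together with the covering direction this gives $X=\bigcup_{\beta}\bigcap_n\overline{V_{\beta\restriction n}}^{Y}$, the Suslin set of the regular scheme of closed sets $(\overline{V_s}^{Y})$.

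The main obstacle is to collapse this Suslin operation to the required form $A\cap B$ with $A$ a genuine $G_\delta$. One always has $\bigcup_\beta\bigcap_n\overline{V_{\beta\restriction n}}^{Y}\subseteq B$, but membership in $B$ only says that the tree $T_y=\{s:y\in\overline{V_s}^{Y}\}$ has a node at every level, whereas membership in $X$ says $T_y$ carries an \emph{infinite branch}; the gap is precisely the failure of König's lemma for the (in general infinitely branching) tree $T_y$. This gap cannot be closed by shrinking $Y$-closures, because the discontinuity of $f^{-1}$ forces points of $X$ to lie simultaneously in the $Y$-closures of several sibling pieces, so linearizing the tree would wrongly discard points of $X$. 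To bridge it I would draw on the extra \emph{open} data that an $F_\sigma$-measurable (not merely continuous) inverse provides: writing $X\setminus V_s=X\cap\bigcap_k O_{s,k}$ with $O_{s,k}$ open in $Y$ — the complement of the $F_\sigma$ set $V_s$ being $G_\delta$ in $\tau$ — I would let $A$ be a countable intersection of such open sets, engineered so that for $y\in A$ the persistent nodes of $T_y$ are compelled to cohere into one $d$-Cauchy branch rather than merely to populate every level. Completeness of $d$ then yields the cluster point $p$, and the pinning step above identifies $y=p\in X$.

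The delicate combinatorial core is exactly the verification that this $A$ simultaneously contains $X$ (the open constraints must not expel any genuine point, despite the multi-sibling phenomenon) and satisfies $A\cap B\subseteq X$ (the open constraints must rule out infinite but branchless $T_y$). I expect this balancing act — matching the $F_\sigma$ decompositions of the pieces used in $B$ against the $G_\delta$ decompositions of their complements used in $A$, so that the two exactly complement each other along the scheme — to be where the full strength of the hypothesis is consumed, and to be the step requiring the most care; everything else (the finer-topology reduction, the scheme, the closedness giving $B\in F_{\sigma\delta}$, and the Hausdorff pinning) is routine. Finally, since $A$ is $G_\delta$ (hence $C_\delta$) and $B$ is $F_{\sigma\delta}$ (hence $C_{\sigma\delta}$), the intersection $X=A\cap B$ is $C_{\sigma\delta}$, as claimed.
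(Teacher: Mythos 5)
Your setup (the two-topologies picture, the small-diameter pieces, the $F_{\sigma\delta}$ set $B$ recording membership at every level, and the Hausdorff ``pinning'' argument showing that a point lying in the $Y$-closures of a decreasing chain of pieces with vanishing $d$-diameter must belong to $X$) matches the paper's strategy, and that part is sound. But the proof has a genuine gap exactly where you flag it: the $G_\delta$ set $A$ is never constructed, only described by its intended effect (``engineered so that the persistent nodes cohere into one branch''), and the route you gesture at --- using the $G_\delta$ complements $X\setminus V_s=X\cap\bigcap_k O_{s,k}$ --- is not the mechanism that works. Indeed, in the portion of the argument you do carry out, the $F_\sigma$-measurability of $f^{-1}$ is never actually used; you take $Y$-closures of the fine-open pieces $V_s$ directly, and the hypothesis is deferred entirely to the missing step.

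The paper's device, which is the one idea you are missing, is to apply the $F_\sigma$-measurability \emph{before} taking closures: for each level-$n$ piece $U$ of small $d$-diameter, write $f(U)=\bigcup\F_{n,U}$ as a countable union of sets \emph{closed in} $(X,\tau)$, and let $\F=\bigcup_n\F_n$ be the resulting countable family. Because each $F\in\F$ is closed in $X$, one has $\bar F_1\cap\dots\cap\bar F_m\cap X=F_1\cap\dots\cap F_m$ ($Y$-closures), so the set
$$A:=Y\setminus\bigcup\bigl\{\bar F_1\cap\dots\cap\bar F_m: F_1,\dots,F_m\in\F,\ F_1\cap\dots\cap F_m=\emptyset\bigr\}$$
is a $G_\delta$-set automatically containing $X$, and $B:=\bigcap_n\bigcup_{F\in\F_n}\bar F$ is the $F_{\sigma\delta}$ factor. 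For $y\in A\cap B$ one picks $F_n\in\F_n$ with $y\in\bar F_n$; membership in $A$ forces every finite intersection $F_0\cap\dots\cap F_n$ to be nonempty, so $(f^{-1}(F_n))_{n\in\w}$ is a centered family of closed sets of vanishing diameter in the complete metric, hence has a common point, and your pinning argument identifies $y$ with its image. Note that \emph{centeredness} of the selected sets is all that is needed --- no coherence along a branch of a scheme --- which is precisely how the K\"onig's-lemma obstruction you worry about is sidestepped. Without an explicit $A$ achieving this, the proposal does not yet constitute a proof.
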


\begin{proof} Write $X$ as the image of a Polish space $P$ under a continuous bijective map $f:P\to X$ with $F_\sigma$-measurable inverse $f^{-1}:X\to P$. Fix a complete metric $d$ that generates the topology of the Polish space $X$.

For every $n\in\w$ fix a countable open cover $\U_n$ of $X$ by sets of $d$-diameter $<2^{-n}$. By the choice of the map $f$, for every $U\in\U_n$ the image $f(U)$ is an $F_\sigma$-set in $X$. Consequently, $f(U)=\bigcup\F_{n,U}$ for some countable family $\F_{n,U}$ of closed sets in $X$. Let $\F_n=\bigcup_{U\in\U_n}\F_{n,U}$ and $\F=\bigcup_{n\in\w}\F_n$. Therefore, $\F$ is a countable family of closed sets in $X$.
It follows that each set $F\in\F$ is equal to the intersection $X\cap\bar F$ of $X$ and the closure $\bar F$ of $F$ in the space $Y$. Observe that for any sets $F_1,\dots,F_n\in\F$ we get $\bar F_1\cap\dots\bar F_n\cap X=F_1\cap\dots\cap F_n$.

Consequently,
$$A:=Y\setminus{\textstyle\bigcup}\{\bar F_1\cap\dots\cap\bar F_n:F_1,\dots,F_n\in\F,\;F_1\cap\dots\cap F_1=\emptyset\}$$ is a $G_\delta$-set in $Y$ containing $X$.
Now consider the $F_{\sigma\delta}$-set $B=\bigcap_{n\in\w}\bigcup_{F\in\F_n}\bar F$ in $Y$ and observe that $A\cap B$ is a $C_{\sigma\delta}$-set in $Y$.
We claim that $X=A\cap B$. The inclusion $X\subset A\cap B$ is obvious.
Assuming that $X\ne A\cap B$, find a point $y\in A\cap B\setminus X$.

Then for every $n\in\w$ there exists a set $F_n\in\F_n$ such that $y\in \bar F_n$. For the set $F_n$ find an open set $U_n\in\U_n$ such that $F_n\subset f(U_n)$ and hence $f^{-1}(F_n)\subset U_n$ has $d$-diameter $<2^{-n}$.

We claim that for every $n\in\w$ the intersection $F_0\cap\dots\cap F_n$ is not empty. Assuming that this intersection is empty, we would conclude that $y\in\bar F_0\cap\dots\cap \bar F_n$ is contained in $Y\setminus A$, which contradicts the choice of $y$. Therefore, the family of closed sets $(F_n)_{n\in\w}$ is centered and so is the family $(f^{-1}(F_n))_{n\in\w}$. Since each set $f^{-1}(F_n)$ has $d$-diameter $<2^{-n}$, the completeness of the metric $d$ ensures that the intersection $\bigcap_{n\in\w}f^{-1}(F_n)$ contains a unique point $x\in X$.

By the Hausdorff property of $Y$, the point $f(x)$ has an open neighborhood $V\subset Y$ whose closure does not contain the point $y$. By the continuity of $f$ at $x$, there exists $n\in\w$ such that $f^{-1}(V)$ contain the ball $B(x;2^{-n})=\{z\in X:d(x,z)<2^{-n}\}$. Then $f^{-1}(F_n)\subset B(x;2^{-n})$ and hence $F_n=f(f^{-1}(F_n))\subset V$. Finally, $y\in \bar F_n\subset\bar V$, which contradicts the choice of $V$. This contradiction completes the proof of the equality $X=A\cap B$.
\end{proof}

 We do not know if Theorem~\ref{t:Lim} generalizes to higher Borel classes.

\begin{problem} Let $X$ be a $G_{\delta\sigma}$-Lusin subspace of a regular topological space $Y$. Is $X$ a $C_{\delta\sigma\delta}$-set (=a countable intersection of $C_{\delta\sigma}$-sets) in $Y$?
\end{problem}

Let us recall that a map $f:X\to Y$ between topological spaces is {\em Borel} if for any open set $U\subset Y$ the preimage $f^{-1}(U)$ is a Borel subset of $X$.

The following characterization of Suslin spaces  was proved in
\cite[2.5]{Banakh 1}.

\begin{theorem}\label{t:S}
A cosmic space $X$ is Suslin if and only if it is the image of a Suslin space $Z$ under a surjective Borel map $f:Z\to X$.
\end{theorem}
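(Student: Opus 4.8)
The plan is to dispose of the forward implication immediately and to concentrate on the converse. If $X$ is Suslin, then the identity map $\mathrm{id}:X\to X$ is a surjective Borel map of the Suslin space $X$ onto itself, so one may take $Z=X$ and $f=\mathrm{id}$. The entire content of the statement therefore lies in the implication ``a Borel image of a Suslin space is Suslin''.

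For the converse, suppose $X$ is cosmic and $f:Z\to X$ is a surjective Borel map from a Suslin space $Z$. First I would reduce to the case of a Polish domain: since $Z$ is Suslin, fix a continuous surjection $g:P\to Z$ of a Polish space $P$ onto $Z$ and put $h=f\circ g:P\to X$. As the composite of a continuous map with a Borel map, $h$ is a surjective Borel map of the Polish space $P$ onto $X$; indeed, for open $U\subseteq X$ the set $f^{-1}(U)$ is Borel in $Z$ and hence $h^{-1}(U)=g^{-1}(f^{-1}(U))$ is Borel in $P$. Thus it suffices to prove the following: if $X$ is cosmic and admits a surjective Borel map $h:P\to X$ from a Polish space $P$, then $X$ is Suslin.

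The strategy is a change-of-topology argument that upgrades $h$ to a continuous map. Since $X$ is regular and has a countable network, I would first arrange that the network consists of closed sets: starting from any countable network and replacing each member by its closure yields a countable network $\cN=\{N_k\}_{k\in\w}$ of closed sets, regularity guaranteeing that whenever $x$ lies in an open set $O$ some $N_k$ with $x\in N_k$ satisfies $\overline{N_k}\subseteq O$. Because each $N_k$ is closed, $X\setminus N_k$ is open, so $h^{-1}(N_k)=P\setminus h^{-1}(X\setminus N_k)$ is Borel in $P$ (no appeal to perfect normality is needed). I would then invoke the standard refinement theorem for Polish topologies \cite[13.1--13.3]{Ke} to obtain a finer Polish topology $\tau'$ on $P$, with the same Borel $\sigma$-algebra, in which every member of the countable family $\{h^{-1}(N_k)\}_{k\in\w}$ is clopen.

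It remains to check that $h:(P,\tau')\to X$ is continuous, after which $X$ appears as a continuous image of the Polish space $(P,\tau')$ and is therefore Suslin. The crucial observation is the defining property of a network: every open set $U\subseteq X$ satisfies $U=\bigcup\{N_k:N_k\subseteq U\}$, since each point of $U$ lies in some network member contained in $U$. Consequently $h^{-1}(U)=\bigcup\{h^{-1}(N_k):N_k\subseteq U\}$ is a union of $\tau'$-open sets and hence $\tau'$-open, so $h$ is continuous. The main obstacle, and the only step requiring genuine care, is precisely this continuity verification: a network is not a base, so one cannot simply open up the preimages of basic open sets; the argument relies essentially on writing each open set as a union of the (non-open) closed network members and on having made all of their $h$-preimages open simultaneously in the single finer Polish topology $\tau'$.
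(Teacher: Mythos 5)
Your proof is correct: the paper itself defers this statement to an external reference, but its in-text proof of the companion Theorem~\ref{t:L} uses exactly your argument (reduce to a Borel surjection from a Polish space, fix a countable network of closed sets, refine the Polish topology so that the preimages of the network members become open, and verify continuity by writing each open set as the union of the network members it contains). The proposal is essentially the same approach and I see no gaps.
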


A similar characterization holds for Lusin spaces.

\begin{theorem}\label{t:L} For a cosmic space $X$ the following conditions are equivalent:
\begin{enumerate}
\item $X$ is Lusin;
\item $X$ is $B$-Lusin;
\item $X$ is the image of a Lusin space $Z$ under a bijective Borel map $f:Z\to X$.
\end{enumerate}
\end{theorem}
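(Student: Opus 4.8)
The plan is to prove the cycle $(1)\Ra(3)\Ra(2)\Ra(1)$, which simultaneously yields the non-trivial equivalence $(1)\Leftrightarrow(2)$. The implications $(1)\Ra(3)$ and $(2)\Ra(1)$ are immediate from the definitions: a Polish space is Lusin and a continuous bijection is a bijective Borel map, giving $(1)\Ra(3)$, while a $B$-Lusin space is Lusin by definition, giving $(2)\Ra(1)$. Thus all the content lies in $(3)\Ra(2)$. First I would reduce $(3)$ to a cleaner form: if $X=f(Z)$ for a Lusin space $Z$ and a bijective Borel map $f\colon Z\to X$, write $Z$ as a continuous bijective image $g\colon P\to Z$ of a Polish space $P$; then $h:=f\circ g\colon P\to X$ is a bijective Borel map (a continuous map is Borel and a composition of Borel maps is Borel). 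So it suffices to show that a cosmic space $X$ which is the image of a Polish space $P$ under a bijective Borel map $h$ is $B$-Lusin.

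Two ingredients drive the argument. The first is a condensation of $X$ onto a metrizable space. Since $X$ is cosmic, it is regular and Lindel\"of, hence normal, and its countable network may, using regularity, be taken to consist of closed sets $\mathcal N=\{N_k\}_{k\in\w}$. For $x\ne y$ one finds disjoint members $N_i\ni x$, $N_j\ni y$ of $\mathcal N$, so Urysohn's lemma applied to the countably many disjoint pairs produces a countable family of continuous functions $X\to[0,1]$ separating points; the diagonal map $c\colon X\to[0,1]^\w$ is then a continuous bijection onto a separable metrizable space $M:=c(X)$, which sits inside the Hilbert cube $Q=[0,1]^\w$. The second ingredient is the change-of-topology theorem \cite[13.1, 13.3]{Ke}: on a Polish space one may pass to a finer Polish topology with the same Borel sets in which any prescribed countable family of Borel sets becomes clopen.

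Now I would run the two ingredients together. Each $N_k$ is closed, hence Borel, so each $h^{-1}(N_k)$ is Borel in $P$; refine the topology of $P$ to a finer Polish topology $\tau'$ with the same Borel $\sigma$-algebra in which every $h^{-1}(N_k)$ is clopen. The key point is that a closed network, though not a base, still generates the topology by unions: every open $U\subset X$ equals $\bigcup\{N_k:N_k\subset U\}$, whence $h^{-1}(U)=\bigcup\{h^{-1}(N_k):N_k\subset U\}$ is a countable union of $\tau'$-clopen sets and so is $\tau'$-open. Thus $h\colon(P,\tau')\to X$ is a continuous bijection from a Polish space and $X$ is Lusin. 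To upgrade this to $B$-Lusin I must show that $h$ carries $\tau'$-open (indeed $\tau'$-Borel $=\tau$-Borel) sets to Borel subsets of $X$. Since $c$ and $h$ are now both continuous, $c\circ h\colon(P,\tau')\to Q$ is a continuous injection between Polish spaces, so by the Lusin--Suslin theorem \cite[15.1]{Ke} it maps Borel sets to Borel sets; hence for Borel $V\subset P$ the set $(c\circ h)(V)$ is Borel in $Q$ and lies in $M$, so it is Borel in $M$, and as $c$ is a continuous bijection, $h(V)=c^{-1}\big((c\circ h)(V)\big)$ is Borel in $X$. Therefore $h^{-1}\colon X\to(P,\tau')$ is $B$-measurable and $X$ is $B$-Lusin, closing the cycle.

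The step I expect to be the main obstacle is guaranteeing continuity of $h$ into the \emph{full} cosmic topology of $X$ rather than merely into the metrizable condensation $M$: making $c\circ h$ continuous is routine, but it only recovers the coarser topology of $M$, whereas $X$ need not be second countable. The device that overcomes this is precisely the use of a \emph{closed} network together with the observation that a network generates the topology by unions, so that the single countable refinement making the sets $h^{-1}(N_k)$ clopen already suffices for continuity of $h$ into $X$ itself; the condensation $c$ is then needed only to import the Lusin--Suslin theorem and conclude that $h^{-1}$ is Borel.
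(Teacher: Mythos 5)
Your proof is correct and uses essentially the same two ingredients as the paper's: the Kechris change-of-topology theorem applied to the preimages of a countable closed network (to recover continuity and hence the Lusin property), and the Lusin--Suslin theorem applied through a continuous injection of the cosmic space into a Polish space (to get $B$-measurability of the inverse). The only differences are organizational --- you prove the cycle $(1)\Ra(3)\Ra(2)\Ra(1)$ with all content in $(3)\Ra(2)$, while the paper proves $(1)\Ra(2)\Ra(3)\Ra(1)$ --- and that you construct the condensation by hand via Urysohn's lemma where the paper cites submetrizability of cosmic spaces.
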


\begin{proof} $(1)\Ra(2)$: Assuming that $X$ is Lusin, find a Polish space $P$ and a continuous bijective map $f:P\to X$. The space $X$ is cosmic, being a continuous image of the Polish space. By \cite[2.9]{Grue}, the cosmic space $X$ is submetrizable and hence admits a continuous injective map $g:X\to Y$ to a Polish space $Y$. By Lusin-Suslin Theorem \cite[15.1]{Ke}, for every open set $U\subset P$ the image $g\circ f(U)$ is a Borel subset of the Polish space $Y$. The continuity of the map $g$ implies that the set $f(U)=g^{-1}(g\circ f(U))$ is Borel in $X$. This means that the space $X$ is $B$-Lusin.
\smallskip

The implication $(2)\Ra(3)$ is trivial.
\smallskip

$(3)\Ra(1)$: Assume that the cosmic space $X$ is the image of a Lusin space $L$ under a bijective Borel map $f:L\to X$. By the definition, the Lusin space $L$ is the image of a Polish space $P$ under a continuous bijective map $g:P\to L$. Then $X$ is the image of the Polish space under the Borel bijective map $h=f\circ g:P\to X$.
Being cosmic, the space $X$ has a countable network $\mathcal N$ consisting of closed subsets of $X$. Since the map $h$ is Borel, for every $N\in\mathcal N$ the preimage $h^{-1}(N)$ is a Borel subset of the Polish space $P$. By \cite[13.5]{Ke},
there exists a continuous bijective map $\xi:Z\to P$ from a Polish space $Z$ such that for every $N\in\mathcal N$ the Borel set $\xi^{-1}(h^{-1}(N))$ is open in $Z$. Consider the map $\varphi=h\circ \xi:Z\to X$ and observe that for any open set $U\subset X$ the preimage $\varphi^{-1}(U)=\bigcup\{\varphi^{-1}(N):N\in\mathcal N,\;N\subset U\}$ is open in $Z$, witnessing that the bijective map $\varphi:Z\to X$ is continuous and hence $X$ is Lusin.
\end{proof}

\section{Borel properties of the identity maps between various function spaces}\label{s:id}

It is clear that for any Tychonoff space $X$ the identity maps $C_k(X)\to C_p(X)$ and $C_k(X)\to \dCF(X)$ are continuous.

\begin{lemma}\label{l:p->k} For any $\aleph_0$-space $X$, the identity map $C_p(X)\to C_k(X)$ is $F_\sigma$-measurable.
\end{lemma}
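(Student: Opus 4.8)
The identity map $C_p(X)\to C_k(X)$ is $F_\sigma$-measurable precisely when every open subset $U$ of $C_k(X)$, regarded as a subset of the underlying set $C(X)$, is an $F_\sigma$-set in the coarser topology of $C_p(X)$. The plan is to prove this first for the subbasic open sets of $C_k(X)$ and then to propagate the property to arbitrary open sets by a countability argument. Since $X$ is an $\aleph_0$-space, Theorem~\ref{t:k} guarantees that $C_k(X)$ is cosmic; a cosmic space has a countable network and is therefore hereditarily Lindel\"of. Consequently every open set $U\subset C_k(X)$ can be written as a \emph{countable} union of basic open sets, each of which is a finite intersection of subbasic sets. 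As the family of $F_\sigma$-sets in $C_p(X)$ is closed under finite intersections and countable unions, it suffices to verify that each subbasic set $\lfloor K;r\rfloor$ and $\lceil K;r\rceil$ of $C_k(X)$ is $F_\sigma$ in $C_p(X)$.

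For the subbasic set $\lceil K;r\rceil=\{f\in C(X):\max f(K)<r\}$ I would exploit the fact that, for a continuous function on the compact set $K$, the supremum over $K$ is attained, so that $\max f(K)<r$ holds if and only if $f(x)\le r-\tfrac1n$ for all $x\in K$ and some $n\in\IN$. This yields the decomposition $\lceil K;r\rceil=\bigcup_{n\in\IN}\bigcap_{x\in K}\{f\in C(X):f(x)\le r-\tfrac1n\}$. Each set $\{f:f(x)\le s\}$ is the complement of the $C_p$-subbasic open set $\lfloor x;s\rfloor$ and hence is closed in $C_p(X)$; an intersection of closed sets is closed, so each inner set is closed and the whole expression is $F_\sigma$ in $C_p(X)$. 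The symmetric computation $\lfloor K;r\rfloor=\bigcup_{n\in\IN}\bigcap_{x\in K}\{f:f(x)\ge r+\tfrac1n\}$, using that $\{f:f(x)\ge s\}$ is the complement of the $C_p$-subbasic open set $\lceil x;s\rceil$, shows that $\lfloor K;r\rfloor$ is $F_\sigma$ in $C_p(X)$ as well.

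Combining these two ingredients finishes the argument: every basic open set of $C_k(X)$, being a finite intersection of subbasic $F_\sigma$-sets, is $F_\sigma$ in $C_p(X)$, and every open set, being a countable union of such basic sets by hereditary Lindel\"ofness, is again $F_\sigma$ in $C_p(X)$. I expect the only genuinely delicate point to be the passage from basic to arbitrary open sets, where the $\aleph_0$-assumption enters through Theorem~\ref{t:k}: without the resulting hereditary Lindel\"of property one could only control unions of at most countably many basic sets, whereas an arbitrary open set of $C_k(X)$ need not \emph{a priori} be such a union. The pointwise decompositions of the subbasic sets themselves are routine and require no hypothesis on $X$ beyond the compactness of $K$, which is built into the definition of the compact-open subbase.
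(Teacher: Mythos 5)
Your proof is correct and follows essentially the same route as the paper: reduce to the subbasic sets via hereditary Lindel\"ofness of the cosmic space $C_k(X)$ (Theorem~\ref{t:k}), then write $\lceil K;r\rceil$ and $\lfloor K;r\rfloor$ as countable unions over $n$ of intersections of $C_p$-closed sets of the form $\{f\in C(X):f(x)\le r-\tfrac1n\}$. The only cosmetic difference is that the paper intersects over a countable dense subset of $K$ (available because $K$, being a compact subset of an $\aleph_0$-space, is separable), whereas you intersect over all of $K$; for continuous $f$ the two intersections coincide and either one is closed in $C_p(X)$, so nothing is lost.
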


\begin{proof}  By Theorem~\ref{t:k}, the function space $C_k(X)$ has a countable network and hence is hereditarily Lindel\"of.  Then  it suffices to find a subbase of the topology of $C_k(X)$ consisting of the sets which are Borel in the topology of the space $C_p(X)$. We claim that the standard subbase of $C_k(X)$ has this property. Given a compact set $K\subset X$ and a real number $r$, we need to check that the open sets
$$\lceil K;r\rceil:=\{f\in C_k(X):\max f(K)<r\}\mbox{ \ and \ } \lfloor K;r\rfloor:=\{f\in C_k(X):\min f(K)>r\}$$are Borel in $C_p(X)$. The compact subset $K$ of the $\aleph_0$-space has a countable network and hence is separable. Consequently, we can fix a countable dense set $\{x_m\}_{m\in\w}$ in $K$.

Now observe that the sets $$\lceil K;r\rceil=\bigcup_{n=1}^\infty\bigcap_{m\in\w}(C_p(X)\setminus \lfloor x_m;r-\tfrac1n\rfloor)\mbox{ \ and \ }\lfloor K;r\rfloor=\bigcup_{n=1}^\infty\bigcap_{m\in\w}(C_p(X)\setminus \lceil x_m;r+\tfrac1n\rceil)$$
are Borel of type $F_{\sigma}$ in $C_p(X)$.
\end{proof}

\begin{lemma}\label{l:F->p} For any cosmic space $X$, the identity map $\dCF(X)\to C_p(X)$ is $C_\sigma$-measurable.
\end{lemma}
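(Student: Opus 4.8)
The plan is to reduce the statement to the two \emph{subbasic} open sets of $C_p(X)$ and then verify directly that each is a $C_\sigma$-set in $\dCF(X)$. First I would record that, since $X$ is cosmic, Theorem~\ref{t:Cp-cosmic} makes $C_p(X)$ cosmic; having a countable network, $C_p(X)$ is hereditarily Lindel\"of, so every open subset of $C_p(X)$ is a countable union of finite intersections of subbasic open sets. Because the class of $C_\sigma$-sets in any space is closed under finite intersections (constructible sets form an algebra, and $(\bigcup_i A_i)\cap(\bigcup_j B_j)=\bigcup_{i,j}A_i\cap B_j$) and under countable unions, it suffices to show that the subbasic sets $\lceil x;r\rceil$ and $\lfloor x;r\rfloor$ of $C_p(X)$ are $C_\sigma$ in the Fell hypograph topology of $\dCF(X)$. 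This mirrors the mechanism of the companion Lemma~\ref{l:p->k}.

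The set $\lceil x;r\rceil=\{f:f(x)<r\}$ is immediate: the singleton $\{x\}$ is compact, so $\lceil x;r\rceil$ coincides with the subbasic set $\lceil\{x\};r\rceil=\{f:\max f(\{x\})<r\}$ of $\dCF(X)$; hence it is open, in particular $C_\sigma$.

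The set $\lfloor x;r\rfloor=\{f:f(x)>r\}$ is where the asymmetry of the hypograph topology makes matters look delicate, and this is the step I expect to be the crux: the Fell hypograph subbase detects \emph{large} values only through $\sup f(U)>r$ for open $U$, and a single point $x$ is not open, so there is no direct subbasic description of $\{f(x)>r\}$. The observation that dissolves the difficulty is the previous paragraph read contrapositively: since $\{f:f(x)<s\}=\lceil\{x\};s\rceil$ is open in $\dCF(X)$, its complement $\{f:f(x)\ge s\}$ is \emph{closed} in $\dCF(X)$. Consequently $\lfloor x;r\rfloor=\bigcup_{n\in\IN}\{f:f(x)\ge r+\tfrac1n\}$ is a countable union of closed sets, i.e.\ an $F_\sigma$-set, and therefore $C_\sigma$, in $\dCF(X)$.

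Putting these together finishes the argument: both subbasic families of $C_p(X)$ are $C_\sigma$ in $\dCF(X)$, and the reduction of the first paragraph promotes this to every open set, so the identity map $\dCF(X)\to C_p(X)$ is $C_\sigma$-measurable. The only facts left to verify are routine—hereditary Lindel\"ofness of cosmic spaces and the Boolean closure properties of the class $C_\sigma$ (both already used implicitly in Lemma~\ref{l:p->k})—and I would simply cite them rather than belabour the bookkeeping.
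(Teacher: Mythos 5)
Your proof is correct and follows essentially the same route as the paper's: reduce via hereditary Lindel\"ofness of the cosmic space $C_p(X)$ to the subbasic sets, note that $\lceil x;r\rceil$ is already subbasic open in $\dCF(X)$, and exhibit $\lfloor x;r\rfloor$ as the $F_\sigma$-set $\bigcup_n\{f:f(x)\ge r+\tfrac1n\}$ (the paper states the De Morgan dual, writing the complement as $\bigcap_n\lceil x;r+\tfrac1n\rceil$, which is the same computation). No gaps.
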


\begin{proof}  By Theorem~\ref{t:Cp-cosmic}, the function space $C_p(X)$ has a countable network and hence is hereditarily Lindel\"of.  Then it suffices to find a subbase of the topology of $C_p(X)$ consisting of sets which are of type $C_\sigma$ in the Fell hypograph topology. We claim that the standard subbase of $C_p(X)$ has this property. Given a point $x\in X$ and a real number $r$, we need to check that the open sets
$$\lceil x;r\rceil:=\{f\in C(X):f(x)<r\}\mbox{ \ and \ } \lfloor x;r\rfloor:=\{f\in C(X):f(x)>r\}$$are Borel of type $C_\sigma$ in $\dCF(X)$. The set $\lceil x;r\rceil$ is open in $\dCF(X)$ and hence $C_\sigma$ by the definition of the Fell hypograph topology. Since $$C(X)\setminus \lfloor x;r\rfloor=\{f\in C(X):f(x)\le r\}=\bigcap_{n=1}^\infty\{f\in C(X):f(x)<r+\tfrac1n\}=\bigcap_{n=1}^\infty\lceil x;r+\tfrac1n\rceil,$$
the set $\lfloor x;r\rfloor$ is Borel of type $F_\sigma$ in $\dCF(X)$.
\end{proof}

\begin{lemma}\label{l:F->k} For any $\aleph_0$-space $X$, the identity map $\dCF(X)\to C_k(X)$ is $C_\sigma$-measurable.
\end{lemma}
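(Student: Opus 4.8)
The plan is to mimic the proofs of Lemmas~\ref{l:p->k} and \ref{l:F->p}. Since $X$ is an $\aleph_0$-space, Theorem~\ref{t:k} guarantees that $C_k(X)$ has a countable network and hence is hereditarily Lindel\"of. Consequently every open subset of $C_k(X)$ is a countable union of finite intersections of the subbasic sets $\lceil K;r\rceil$ and $\lfloor K;r\rfloor$: writing an open set as the union of all subbasic-generated basic sets it contains and passing to a countable subcover. Because the class of $C_\sigma$-sets is closed under finite intersections (the constructible sets form an algebra) and under countable unions, it will suffice to verify that each of the two kinds of subbasic set is a $C_\sigma$-set in the Fell hypograph topology $\dCF(X)$.

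The first subbasic set is immediate: $\lceil K;r\rceil=\{f:\max f(K)<r\}$ belongs to the standard subbase of $\dCF(X)$, so it is open in $\dCF(X)$; being open it is constructible and hence of type $C_\sigma$. For the second set $\lfloor K;r\rfloor=\{f:\min f(K)>r\}$ I would exploit the separability of $K$. As a compact subspace of the $\aleph_0$-space $X$, the set $K$ has a countable network and is thus separable; fix a countable dense set $\{x_m\}_{m\in\w}\subset K$. Continuity of $f$ on $K$ gives $\min f(K)=\inf_{m\in\w}f(x_m)$, whence
$$\lfloor K;r\rfloor=\bigcup_{n=1}^\infty\bigcap_{m\in\w}\{f\in C(X):f(x_m)\ge r+\tfrac1n\}.$$
Each set $\{f:f(x_m)\ge r+\tfrac1n\}$ is the complement of the subbasic open set $\lceil x_m;r+\tfrac1n\rceil=\{f:f(x_m)<r+\tfrac1n\}$ (recall that the singleton $\{x_m\}$ is compact), hence closed in $\dCF(X)$; the intersection over $m$ is closed and the union over $n$ is therefore of type $F_\sigma$, and in particular of type $C_\sigma$.

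The point I expect to require the most care is recognizing why one obtains only $C_\sigma$-measurability rather than $F_\sigma$-measurability. The obstruction is that $\dCF(X)$ need not be a perfect space, so the open set $\lceil K;r\rceil$ cannot in general be promoted to an $F_\sigma$-set there. This is reflected in the asymmetry between the two kinds of point evaluations: the set $\lceil x;s\rceil=\{f:f(x)<s\}$ is genuinely open in $\dCF(X)$, whereas $\lfloor x;s\rfloor=\{f:f(x)>s\}$ is, by the computation in Lemma~\ref{l:F->p}, merely $F_\sigma$ and not open. Thus the density trick that rendered $\lceil K;r\rceil$ an $F_\sigma$-set in $C_p(X)$ in Lemma~\ref{l:p->k} is unavailable here, and the direct appeal to openness of $\lceil K;r\rceil$---yielding constructibility and hence $C_\sigma$---is exactly the right amount of information to assemble the conclusion.
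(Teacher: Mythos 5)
Your proof is correct and follows the same overall strategy as the paper's: reduce to the subbasic sets using the hereditary Lindel\"ofness of $C_k(X)$ (which comes from Theorem~\ref{t:k}), observe that $\lceil K;r\rceil$ is already open in $\dCF(X)$ and hence of type $C_\sigma$, and show that $\lfloor K;r\rfloor$ is of type $F_\sigma$ in $\dCF(X)$. The one place where you genuinely diverge is the decomposition of $\lfloor K;r\rfloor$: the paper takes a countable network $\{K_m\}_{m\in\w}$ of compact subsets of $K$ and writes $\lfloor K;r\rfloor=\bigcup_{n\in\w}\bigl(C(X)\setminus\bigcup_{m\in\w}\lceil K_m;r+\tfrac1{2^n}\rceil\bigr)$, whereas you take a countable dense subset $\{x_m\}_{m\in\w}$ of $K$ and write $\lfloor K;r\rfloor=\bigcup_{n}\bigcap_{m}\bigl(C(X)\setminus\lceil x_m;r+\tfrac1n\rceil\bigr)$. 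Both identities hold --- yours because $\min f(K)=\inf_m f(x_m)$ by density and continuity, the paper's because $\{K_m\}$ is a network in $K$ --- and yours is marginally more elementary: it reuses the point-evaluation computation from Lemma~\ref{l:p->k} and needs only separability of $K$, exploiting that singletons are compact so that $\lceil x_m;s\rceil$ is subbasic open in $\dCF(X)$. Your closing remark about why the openness of $\lceil K;r\rceil$ cannot be upgraded to $F_\sigma$ in the possibly non-perfect space $\dCF(X)$, which is what forces the conclusion to be $C_\sigma$- rather than $F_\sigma$-measurability, is also accurate and is exactly the asymmetry the paper is implicitly relying on.
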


\begin{proof}  By Theorem~\ref{t:k}, the function space $C_k(X)$ has a countable network and hence is hereditarily Lindel\"of.  Then it suffices to find a subbase of the topology of $C_k(X)$ consisting of sets which are of type $F_\sigma$ in the Fell hypograph topology. We claim that the standard subbase of $C_k(X)$ has this property. Given a nonempty compact set $K\subset X$ and a real number $r$, we need to check that the open sets
$$\lceil K;r\rceil:=\{f\in C(X):\max f(K)<r\}\mbox{ \ and \ } \lfloor K;r\rfloor:=\{f\in C(X):\min f(K)>r\}$$are Borel of class $C_\sigma$ in $\dCF(X)$. The set $\lceil K;r\rceil$ is open in $\dCF(X)$ by the definition of the Fell hypograph topology.

 The compact subset $K$ of the $\aleph_0$-space has a countable network $\{K_n\}_{n\in\w}$ consisting of closed (and hence compact) sets in $K$.

 Since $$
  \lfloor K;r\rfloor=\bigcup_{n\in\w}\{f\in C(X):\min f(K)\ge r+\tfrac1{2^n}\}=\bigcup_{n\in\w}(C(X)\setminus \bigcup_{m\in\w}\lceil K_m;r+\tfrac1{2^n}\rceil),
 $$
 the set $\lfloor x;r\rfloor$ is Borel of type $F_\sigma$ in $\dCF(X)$.
\end{proof}

\section{Function spaces on $F_\sigma$-quotient spaces}\label{s:Fs-quotient}

\begin{lemma}\label{l:1=>2} For any $F_\sigma$-quotient Tychonoff space $X$,  the function spaces $C_p(X)$ and  $C_k(X)$ are $F_\sigma$-Lusin and $\dCF(X)$ is $C_{\delta\sigma}$-Lusin.
\end{lemma}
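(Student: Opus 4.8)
The plan is to prove the three assertions in a definite order: first $C_p(X)$ is $F_\sigma$-Lusin, then transport this up to $C_k(X)$, and finally down to $\dCF(X)$, using the identity-map Lemmas~\ref{l:p->k} and \ref{l:F->k} together with the Polish-topology refinement technique from the proof of Theorem~\ref{t:L}. Before anything else I would secure two background facts. Writing $X$ as a quotient $q\colon M\to X$ of a $\sigma$-compact (hence separable) metrizable space $M=\bigcup_{n}M_n$, I note that $X$ is cosmic and in fact an $\aleph_0$-space (as a quotient of a $\sigma$-compact metrizable space; this is where I would cite the relevant characterization in \cite{Ban}), so that Lemmas~\ref{l:p->k} and \ref{l:F->k} apply; and that, being $F_\sigma$-quotient, $X$ is separable and carries an $\IR$-universal $\w^\w$-based (universal) uniformity, by the cited Proposition on $F_\sigma$-quotient spaces.

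For the anchor, $C_p(X)$ is $F_\sigma$-Lusin. The mechanism I have in mind is the closed embedding $q^{*}\colon C_p(X)\to C_p(M)$, $f\mapsto f\circ q$ (standard for quotient maps, see \cite{Arh}); since a closed subspace of an $F_\sigma$-Lusin space is again $F_\sigma$-Lusin (restrict the witnessing Polish preimage to the closed preimage), it suffices to treat $C_p(M)$. Here one represents $C_p(M)$ by coherent sequences: the map $\theta\colon P\to C_p(M)$, $\theta((g_n))=\bigcup_n g_n$, where $P\subseteq\prod_n C(M_n)$ (a Polish product of sup-norm Banach spaces) consists of the coherent sequences whose glued function is continuous on $M$, is a continuous bijection, and the inverse sends basic open sets of the uniform-on-$\{M_n\}$ topology to $F_\sigma$-sets of $C_p(M)$ by exactly the computation in Lemma~\ref{l:p->k} (sup-over-a-compactum conditions are $F_\sigma$ in the pointwise topology). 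Obtaining precisely the $F_\sigma$-Lusin complexity from this picture is delicate (see the last paragraph), and in a clean write-up I would instead invoke the sharp $\w^\w$-base result of \cite{Ban} — the $F_\sigma$-Lusin refinement of the Suslin statement \cite[7.5.1(18)]{Ban} used in Section~\ref{s:Ascoli}.

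To pass from $C_p(X)$ to $C_k(X)$: let $\phi\colon P\to C_p(X)$ witness $F_\sigma$-Lusinness, and compose with the identity $C_p(X)\to C_k(X)$, which is $F_\sigma$-measurable by Lemma~\ref{l:p->k}, to get an $F_\sigma$-measurable bijection $h\colon P\to C_k(X)$. Since $X$ is an $\aleph_0$-space, $C_k(X)$ is cosmic (Theorem~\ref{t:k}) with a countable network $\mathcal N$ of closed sets, and each $h^{-1}(N)$ is Borel in $P$; by \cite[13.5]{Ke} I refine the Polish topology of $P$ to make every $h^{-1}(N)$ clopen while staying Polish. Then $h$ becomes continuous (a network argument), and its inverse is $F_\sigma$-measurable because the old open sets map under $h$ to $\phi$-images, which are $F_\sigma$ in $C_p(X)$ and hence in the finer $C_k(X)$, while the new clopen generators map to the closed sets $N$. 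Thus $C_k(X)$ is $F_\sigma$-Lusin. Finally, to reach $\dCF(X)$, I compose the witness $\psi\colon Q\to C_k(X)$ with the continuous identity $\iota\colon C_k(X)\to\dCF(X)$; its inverse is the identity $\dCF(X)\to C_k(X)$, which is $C_\sigma$-measurable by Lemma~\ref{l:F->k}, so it carries the $F_\sigma$-sets $\psi(V)$ to $C_{\delta\sigma}$-sets of $\dCF(X)$, whence $\iota\circ\psi$ witnesses that $\dCF(X)$ is $C_{\delta\sigma}$-Lusin.

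The main obstacle is the anchor. Because $M$ is genuinely not hemicompact (otherwise $X$ would be $G$-quotient), the ``continuity of the glued function'' condition cutting $P$ out of the coherent sequences is not closed but only of higher Borel class, so the crude coherent-sequence construction by itself yields a Lusin (at best $C_{\sigma\delta}$-Lusin) structure rather than the sharp $F_\sigma$-Lusin one; controlling the refinement of $P$ to a Polish topology so that the inverse stays $F_\sigma$-measurable is exactly the delicate point, and is what forces reliance on the $\w^\w$-base bookkeeping of \cite[\S7.5]{Ban}. The second point to get right up front is that $X$ is an $\aleph_0$-space, since both transport steps depend on the cosmic (hence hereditarily Lindel\"of) character of $C_k(X)$ via Lemmas~\ref{l:p->k} and \ref{l:F->k}.
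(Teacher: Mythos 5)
Your overall architecture (anchor the $F_\sigma$-Lusin property at $C_p(X)$, then transport it to $C_k(X)$ by the topology-refinement trick of Theorem~\ref{t:L} and down to $\dCF(X)$ by Lemma~\ref{l:F->k}) is genuinely different from the paper's, which constructs a single Polish space $P\subset\w^\w\times\IR^D$ and a single bijection $\xi:P\to C(X)$ witnessing all three statements at once. Your two transport steps are essentially sound: after making the $\phi$-preimages of a countable closed network of $C_k(X)$ clopen via \cite[13.5]{Ke}, the images of the new basic open sets are finite intersections of $F_\sigma$-subsets of $C_p(X)$ (hence of the finer $C_k(X)$), closed sets, and open sets of the perfect space $C_k(X)$, so the refined inverse stays $F_\sigma$-measurable; and composing an $F_\sigma$-measurable inverse with the $C_\sigma$-measurable identity $\dCF(X)\to C_k(X)$ does yield $C_{\delta\sigma}$-measurability. (You correctly flag that one must first know $X$ is an $\aleph_0$-space; the paper gets this from \cite[11.3]{Grue}.)

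The genuine gap is the anchor. The reduction to $C_p(M)$ via the closed embedding $q^*$ is legitimate, but you never actually prove that $C_p(M)$ is $F_\sigma$-Lusin: you concede that the coherent-sequence model cuts $P$ out of $\prod_n C(M_n)$ by a non-closed condition (``the glued function is continuous on $M$''), and you then defer to ``the $F_\sigma$-Lusin refinement of \cite[7.5.1(18)]{Ban}'' --- but that result, as invoked in Section~\ref{s:Ascoli}, gives only that $C_k(X)$ is \emph{Suslin}, and no $F_\sigma$-Lusin strengthening of it is available in the paper's references. This is precisely where the paper invests all of its effort: it parametrizes $C(X)$ by pairs $(\alpha,f)\in\w^\w\times\IR^D$ in which $\alpha$ records the \emph{minimal} moduli of uniform continuity of $f$ on the dense sets $D\cap M_n$, together with a compatibility condition along the fibers of $q$. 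The minimality clause is what makes the defining conditions a $G_\delta$-set (so $P$ is Polish), makes $\xi$ injective (Claim~\ref{cl:inj}), and --- crucially --- makes the $\xi$-images of the subbasic open sets $P_{n\le m}$, $P_{n\ge m}$, $P_{x<r}$, $P_{x>r}$ open or closed (hence $F_\sigma$) in $C_p(X)$ (Claim~\ref{cl:Cp->P}). Without some explicit control of this kind over the Borel complexity of the images of a subbase of $P$, the $F_\sigma$-Lusin conclusion for $C_p(X)$ does not follow, so the proposal as written does not establish the lemma.
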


\begin{proof} By the definition of an $F_\sigma$-quotient space, there exists a quotient surjective map $q:M\to X$ defined on a $\sigma$-compact metrizable space $M$. First we establish two properties of the quotient map $q$.

\begin{claim}\label{cl:a} For any sequence $\{x_n\}_{n\in\w}\subset X$ that accumulates at some point $x\in X$ there exists a sequence $\{z_k\}_{k\in\w}\subset q^{-1}(\{x_n\}_{n\in\w})$ that converges to a point $z\in q^{-1}(x)$.
\end{claim}

\begin{proof} If the set $\Omega=\{n\in\w:x_n=x\}$ is infinite, then take any point $z\in q^{-1}(x)$ and put $z_k=z$ for all $k\in\Omega$. It is clear that the sequence $(z_k)_{k\in\Omega}$ converges to $z$ and $\{q(z_k)\}_{k\in\Omega}\subset q^{-1}(\{x_n\}_{n\in\Omega})\subset q^{-1}(\{x_n\}_{n\in\w})$.

So, we assume that the set $\Omega$ is finite. Then the set $A=\{x_n:n\in\w\setminus\Omega\}$ is not closed in $X$ and by the quotient property of $q$, the preimage $q^{-1}(A)$ is not closed in $M$. Since $M$ is metrizable, there exists a sequence $\{z_k\}_{k\in\w}\subset q^{-1}(A)$, convergent to a point $z\notin q^{-1}A$. The continuity of $q$ implies that $q(z)\in \bar A\setminus A=\{x\}$.
\end{proof}

\begin{claim}\label{cl:union} For every compact set $K\subset X$ and a cover $\U$ of the set $q^{-1}(K)$ by open subsets of $M$ there exists a finite subfamily $\F\subset\U$ such that $K\subset\bigcup_{U\in\F}q(U)$.
\end{claim}

\begin{proof} Since the $\sigma$-compact space $M$ is Lindel\"of, the open cover $\U$ of the closed set $q^{-1}(K)$ contains a countable subcover $\V$. We can choose an enumeration $\{U_n\}_{n\in\w}$ of the countable family $\V$ such that $q^{-1}(K)\subset\bigcup_{n=k}^\infty U_n$ and hence $K\subset\bigcup_{n=k}^\infty q(U_n)$ for every $k\in\w$.

To finish the proof of the claim, it suffices to find $n\in\w$ such that $K\subset \bigcup_{k\le n}q(U_k)$. Assuming that no such number $n$ exists, for every $n\in\w$ we can choose a point $x_n\in K\setminus\bigcup_{k\le n}q(U_k)$. By the compactness of $K$, the sequence $(x_n)_{n\in\w}$ accumulates at some point $x\in K$. Since $K\subset\bigcup_{n\in\w}q(U_n)$, there exists a number $k\in\w$ such that $x\in q(U_k)$. Then $x\notin \{x_n\}_{n>k}$.

By Claim~\ref{cl:a}, there exists a sequence $\{z_n\}_{n\in\w}\subset q^{-1}(\{x_n\}_{n>k})$ that converges to a point $z\in q^{-1}(x)\subset q^{-1}(K)$. Since $\{U_n\}_{n>k}$ is a cover of $q^{-1}(K)$, the point $z$ belongs to some set $U_n$ with $n>k$. Since the sequence $(z_m)_{m\in\w}$ converges to the point $z\in U_n$, there exists a number $m>n$ such that $z_m\in U_n\setminus q^{-1}(\{x_{k+1},\dots,x_n\})$. Then $q(z_m)=x_i$ for some $i>n$ and hence $x_i=q(z_m)\in q(U_n)$, which contradicts the choice of $x_i$.
\end{proof}

Write the $\sigma$-compact space $M$ as the countable union $M=\bigcup_{n\in\w}M_n$ of an increasing sequence $(M_n)_{n\in\w}$ of compact subsets of $M$. Fix a countable subset $D\subset M$ such that for any $n\in\w$ the intersection $D\cap M_n$ is dense in $M_n$. Also fix a metric $d$ generating the topology of $M$. For a point $x\in M$ and a positive real number $\e$ let $B(x;\e)=\{y\in M:d(y,x)<\e\}$ be the $\e$-ball centered at $x$ and $D(x;\e):=D\cap B(x;\e)$ be the trace of the ball $B(x,\e)$ on $D$. For every $n\in\w$ let $Q_n=\{(x,y)\in M_n\times M_n:q(x)=q(y)\}$.

In the Polish space $\w^\w\times\IR^D$ consider the $G_\delta$-subset
$$
\begin{aligned}
P:=&\{(\alpha,f)\in \w^\w\times \IR^D: \forall n\in\w\;\forall x\in D\cap M_n\;\forall y\in D(x;\tfrac1{2^{\alpha(n)}})\;\;(|f(x)-f(y)|\le \tfrac1{2^n})\}\cap\\
&\{(\alpha,f)\in \w^\w\times\IR^D: \forall n\in\w\;\exists x\in D\cap M_n\;\exists y\in D(x;\tfrac2{2^{\alpha(n)}})\;\;(|f(x)-f(y)|>\tfrac1{2^n})\}\cap\\
&\{(\alpha,f)\in\w^\w\times\IR^D:\forall n\in\w\;\forall (x,y)\in Q_n\;\forall x'\in D(x;\tfrac1{2^{\alpha(n)}})\;\forall y'\in D(y;\tfrac1{2^{\alpha(n)}})\;\;\\
&\hskip100pt (|f(x')-f(y')|\le\tfrac3{2^n}\}.
\end{aligned}
$$
Observe that for every $(\alpha,f)\in P$ and every $n\in\w$ the restriction $f{\restriction}D\cap M_n$ is a uniformly continuous function, which admits a uniformly continuous extension $\bar f_n:M_n\to \IR$ to $M_n$ (by the density of $D\cap M_n$ in $M_n$). Taking into account that $D\cap M_n\subset D\cap M_{n+1}$, we conclude that $\bar f_n=\bar f_{n+1}{\restriction}M_n$, which allows us to define a function $\bar f:M\to \IR$ such that $\bar f{\restriction}M_n=f_n$ for all $n\in\w$. We claim that the function $\bar f$ is continuous. Indeed, for any $x\in M$ and any $\e>0$, we can find $n\in\w$ such that $x\in M_n$ and $\frac1{2^n}<\frac13\e$. We claim that $|\bar f(x)-\bar f(y)|<\e$ for any $y\in M$ with ${d}(x,y)<\frac1{2^{\alpha(n)}}$. Find a number $m\ge n$ such that $y\in M_m$. By the continuity of the map $\bar f{\restriction}M_m$ and the density of $D\cap M_k$ in $M_k$ for $k\in\{n,m\}$, there exist points $x'\in D\cap M_n$ and $y'\in D\cap M_m$ such that ${d}(x',y')<\frac1{2^{\alpha(n)}}$, $|\bar f(x)-\bar f(x')|<\frac13\e$ and $|\bar f(y)-\bar f(y')|<\frac13\e$.
Then
$$|\bar f(x)-\bar f(y)|\le |\bar f(x)-\bar f(x')|+|\bar f(x')-\bar f(y')|+|\bar f(y')-\bar f(y)|<\tfrac13\e+\tfrac1{2^n}+\tfrac13\e<\e.$$
Therefore the function $\bar f$ is continuous.

Next, we show that $\bar f(x)=\bar f(y)$ for any $x,y\in M$ with $q(x)=q(y)$. Assuming that $\bar f(x)\ne \bar f(y)$, we can find $n\in\w$ such that $x,y\in M_n$ and $|f(x)-f(y)|>\frac{3}{2^n}$. Then $(x,y)\in Q_n$. By the density of $D$ in $M$, there exist points $x'\in D(x;\frac1{2^{\alpha(n)}})$ and $y'\in D(y;\frac1{2^{\alpha(n)}})$ such that $|f(x')-f(y')|=|\bar f(x')-\bar f(y')|>\frac3{2^n}$.  But this contradicts the inclusion $(\alpha,f)\in P$. This contradiction shows that $\bar f=\tilde f\circ q$ for some function $\tilde f:X\to\IR$. Since the map $q$ is quotient, the function $\tilde f:X\to\IR$ is continuous.
\smallskip

So, we can consider the function $\xi:P\to C(X)$ assigning to each pair $(\alpha,f)$ the (unique) continuous function $\tilde f:X\to\IR$ such that $f=\tilde f\circ q{\restriction}D$.

\begin{claim}\label{cl:sur} The function $\xi:P\to C(X)$ is surjective.
\end{claim}

\begin{proof} Let  $\varphi:X\to\IR$ be any continuous function. For every $n\in\w$, the continuity of the function $\psi=\varphi\circ q$ at points of the compact set $M_n$ yields a number $\alpha(n)\in\w$ such that $|\psi(x)-\psi(y)|\le 2^{-n}$ for any $x\in M_n$ and $y\in M$ with ${d}(x,y)<2^{-\alpha(n)}$. We can assume that $\alpha(n)$ is the smallest possible number with this property.
Then there exists $x\in M_n$ and $y\in M$ such that ${d}(x,y)<2^{1-\alpha(n)}$ and $|\psi(x)-\psi(y)|>2^{-n}$. By the density of the sets $D\cap M_n$ in $M_n$ and $D$ in $M$, there are points $x'\in M_n\cap D$ and $y'\in D$ such that ${d}(x',y')<2^{1-\alpha(n)}$ and $|\psi(x')-\psi(y')|>2^{-n}$. It is easy to see that the pair $(\alpha,\psi{\restriction}D)$ belongs to the first two sets in the definition of the set $P$.

Let us show that $(\alpha,\psi{\restriction}D)$ also belongs to the third set. Assuming that this is not true, we can find
$n\in\w$, $(x,y)\in Q_n$ and points $x'\in D(x;2^{-\alpha(n)})$, $y'\in D(y;2^{-\alpha(n)})$ such that $|\psi(x')-\psi(y')|>\frac3{2^n}$. It follows that $x\in M_n\cap D(x';2^{-\alpha(n)})$ and $y\in M_n\cap D(y';2^{-\alpha(n)})$. By the density of the set $D\cap M_n$ in $M$, there are points $x''\in M_n\cap D(x';2^{-\alpha(n)})$ and $y''\in M_n\cap D(y';2^{-\alpha(n)})$ such that $\max\{|\psi(x)-\psi(x'')|,|\psi(y)-\psi(y'')|\}<\frac1{2^{n+1}}$. Since $x'\in D(x'';2^{-\alpha(n)})$ and $y'\in D(y'';2^{-\alpha(n)})$, the choice of $\alpha(n)$ ensures that $\max\{|\psi(x'')-\psi(x')|,|\psi(y''){\color{blue}-}\psi(y')|\}\le\frac1{2^n}$. It follows from $(x,y)\in Q_n$ that $q(x)=q(y)$ and hence $\psi(x)=\varphi(q(x))=\varphi(q(y))=\psi(y)$.
Then
$$
\begin{aligned}
\tfrac3{2^n}<&|\psi(x')-\psi(y')|\le\\
&|\psi(x')-\psi(x'')|+|\psi(x'')-\psi(x)|+|\psi(x)-\psi(y)|+|\psi(y)-\psi(y'')|+|\psi(y'')-\psi(y')|<\\
&\le\tfrac1{2^n}+\tfrac1{2^{n+1}}+0+\tfrac1{2^{n+1}}+\tfrac1{2^n}=\tfrac3{2^n},
\end{aligned}
$$
which is a contradiction finishing the proof of the inclusion $(\alpha,\psi{\restriction}D)\in P$.

Observe that for the function $f=\psi{\restriction}D$, we get $\bar f=\psi$ and $\varphi=\tilde f=\xi(\alpha,f)$, which means that the function $\xi$ is surjective.
\end{proof}

\begin{claim}\label{cl:inj} The function $\xi:P\to C(X)$ is injective.
\end{claim}

\begin{proof} Assume that $(\alpha,f),(\beta,g)\in P$ are two pairs such that $\tilde f=\tilde g$, where $\tilde f=\xi(\alpha,f)$ and $\tilde g=\xi(\beta,g)$. Then $f=\tilde f\circ q{\restriction}D=\tilde g\circ q{\restriction}D=g$.

It remains to prove that $\alpha=\beta$. Assuming that $\alpha\ne\beta$, we can find $n\in\w$ such that $\alpha(n)\ne\beta(n)$. We lose no generality assuming that $\alpha(n)<\beta(n)$. Since $(\beta,g)\in P$, there exist points $x\in D\cap M_n$ and $y\in D(x;2^{1-\beta(n)})$ such that $|g(x)-g(y)|>2^{-n}$. The strict inequality $\alpha(n)<\beta(n)$ implies  $2^{-\alpha(n)}\ge 2^{1-\beta(n)}$ and hence $y\in D(x;2^{1-\beta(n)})\subset D(x;2^{-\alpha(n)})$. Then we get points $x\in D\cap M_n$ and $y\in D(x;2^{-\alpha(n)})$ such that $|f(x)-f(y)|=|g(x)-g(y)|>2^{-n}$. But this contradicts the inclusion $(\alpha,f)\in P$.
\end{proof}

\begin{claim}\label{cl:cont} The function $\xi:P\to C_k(X)$ is continuous.
\end{claim}

\begin{proof} By \cite[11.3]{Grue}, the $F_\sigma$-quotient space $X$ is an $\aleph_0$-space. By Theorem~\ref{t:k}, the function space $C_k(X)$ is cosmic and hence hereditarily Lindel\"of. So, it suffices to show that for any nonempty compact set $K\subset X$ and any real number $r$ the sets $\xi^{-1}(\lfloor K;r\rfloor)$ and
$\xi^{-1}(\lceil K;r\rceil)$ are open in $P$.

To see that $\xi^{-1}(\lfloor K;r\rfloor)$  is open, take any pair $(\alpha,f)\in\xi^{-1}(\lfloor K;r\rfloor)$. Consider the function $\tilde f=\xi(\alpha,f)$. Since $\tilde f\in \lfloor K;r\rfloor$, there exists an open neighborhood $U\subset X$ of $K$ and a number $n\in\w$ such that $r+\tfrac2{2^n}<\inf\tilde f(U)$.

For every $z\in q^{-1}(K)$ we can find a number $k\ge n$ such that $z\in M_k$. By the density of the set $D\cap M_k$ in $M_k$, there exists a point $x\in D\cap M_k\cap q^{-1}(U)\cap B(z;2^{-\alpha(k)})$. Then  $z\in B(x;2^{-\alpha(k)})$. Therefore,
 $$q^{-1}(K)\subset \bigcup_{k=n}^\infty\{B(x;2^{-\alpha(k)}):x\in D\cap M_k\cap q^{-1}(U)\}.$$
By Claim~\ref{cl:union}, there exists $m\ge n$ and a finite subset $F\subset D\cap q^{-1}(U)$ such that $$K\subset \bigcup_{k=n}^m\bigcup_{x\in F\cap M_k}q\big(B(x;2^{-\alpha(k)})\big).$$ Consider the open neighborhood $$
W:=\bigcap_{k\le m}\bigcap_{x\in F}\{(\beta,g)\in P:\beta(k)=\alpha(k),\;g(x)>r+\tfrac1{2^n}\}$$of $(\alpha,f)$ in $P$.

We claim that $\xi(W)\subset\lfloor K;r\rfloor$.
Take any pair $(\beta,g)\in W$ and consider the function $\tilde g=\xi(\beta,g)$. Given any point $y\in K$ we should prove that $\tilde g(y)>r$. Find $k\in[n,m]$ and $x\in F\cap M_k$ such that $y\in q(B(x,2^{-\alpha(k)}))$. Then $y=q(z)$ for some $z\in B(x,2^{-\alpha(k)})$. The inclusion $(\beta,g)\in W$ ensures that $g(x)>r+\frac1{2^n}$. Let $\bar g:X\to\IR$ be the (unique) continuous function extending the function $g$. We claim that $|\bar g(z)-g(x)|\le\frac1{2^k}$. To derive a contradiction, assume that $|\bar g(z)-g(x)|>\frac1{2^k}$. By the continuity of $\bar g$ and the density of $D$ in $M$, there exists a point $t\in D$ such that $d(t,z)<2^{-\alpha(k)}-d(z,x)$ and $|\bar g(t)-\bar g(z)|<|\bar g(z)-g(x)|-\frac1{2^k}$. Then  $t\in B(x;2^{-\alpha(k)})$ and $|\bar g(t)-g(x)|>\frac1{2^k}$. The inclusion $(\beta,g)\in W$ guarantees that $\beta(k)=\alpha(k)$. Then $x\in D\cap M_k$ and $t\in D\cap B(x;2^{-\beta(k)})$ are two points with $|g(t)-g(x)|>\frac1{2^k}$, which contradicts the inclusion $(\beta,g)\in P$. This contradiction shows that $|\bar g(z)-g(x)|\le\frac1{2^k}\le\frac1{2^n}$. Then
$$\tilde g(y)=\tilde g\circ q(z)=\bar g(z)>g(x)-|\bar g(z)-g(x)|\ge r+\tfrac1{2^n}-\tfrac1{2^n}=r.$$

Therefore, $W\subset \xi^{-1}(\lfloor K;r\rfloor)$ and the set $\xi^{-1}(\lfloor K;r\rfloor)$ is open in $P$. By analogy we can prove that the set $\xi^{-1}(\lceil K;r\rceil)$  is open in $P$.
\end{proof}

\begin{claim}\label{cl:Cp->P} The function $\xi^{-1}:C_p(X)\to P$ is $F_\sigma$-measurable.
\end{claim}

\begin{proof} Since the Polish space $P$ is hereditarily Lindel\"of, it suffices to show that for any $(n,m)\in\w$, point $x\in D$ and real number $r$, the images of the subbasic open sets
$$P_{n\le m}:=\{(\alpha,f)\in P:\alpha(n)\le m\},\;\;P_{n\ge m}:=\{(\alpha,f)\in P: \alpha(n)\ge m\}$$
and
$$P_{x<r}:=\{(\alpha,f)\in P:f(x)<r\},\;\;P_{x>r}:=\{(\alpha,f)\in P:f(x)>r\}$$
under the map $\xi$ are $F_\sigma$-sets in $C_p(X)$. We shall prove that the images of these sets are open or closed (and hence $F_\sigma$) in $C_p(X)$.

Observe that the set
\begin{itemize}
\item $\xi(P_{n\le m})=\{f\in C(X):\forall x\in D\cap X_n\;\forall y\in D(x,\frac1{2^{m}})\; |f\circ q(x)-f\circ q(y)|\le \tfrac1{2^n}\}$ is closed in $C_p(X)$,
\item $\xi(P_{n\ge m})=\{f\in C(X):\exists x\in D\cap X_n\;\exists y\in D(x,\frac2{2^{m}})\; |f\circ q(x)-f\circ q(y)|>\frac1{2^n}\}$ is open in $C_p(X)$,
\item $\xi(P_{x<r})=\{f\in C(X):f\circ q(x)<r\}$ and $\xi(P_{x>r})=\{f\in C(X):f\circ q(x)>r\}$ are open in $C_p(X)$.
\end{itemize}
\end{proof}


\begin{claim}\label{cl:E} For every $x,y\in X$ and $\e>0$ the set $E=\{f\in\dCF(X):|f(x)-f(y)|>\e\}$ is of type $C_\sigma$ in $\dCF(X)$.
\end{claim}

\begin{proof} Let $\mathbb Q$ be the set of rational numbers and $Q=\{(p,q)\in\mathbb {Q}\times\mathbb Q:p+\e<q\}$. Observe that $$
\begin{aligned}
E=&\bigcup_{(p,q)\in Q}\big(\{f\in\dCF(X):f(x)<p,\;q\le f(y)\}\cup\{f\in\dCF(X):f(y)<p,\;q\le f(x)\}\big)=\\
&\bigcup_{(p,q)\in Q}(\lceil x;p\rceil\setminus \lceil y;q\rceil)\cup (\lceil y;p\rceil\setminus \lceil x;q\rceil)
\end{aligned}
$$is of type $C_\sigma$ in $\dCF(X)$.
\end{proof}

\begin{claim}\label{cl:CF->P} The function $\xi^{-1}:\dCF(X)\to P$ is $C_{\delta\sigma}$-measurable.
\end{claim}

\begin{proof} Similarly as in Claim~\ref{cl:Cp->P}, it suffices to check that for any $(n,m)\in\w$, point $x\in D$ and real number $r$, the images of the subbasic open sets $P_{n\le m},P_{n\ge m},P_{x>r}$, $P_{x<r}$ under the map $\xi$ are $C_{\delta\sigma}$-sets in $\dCF(X)$. We shall prove that the images of these sets are of type $C_\sigma$ or $C_\delta$ in $\dCF(X)$.

By Claim~\ref{cl:E}, the set
\begin{itemize}
\item $\xi(P_{n\le m})=\{f\in C(X):\forall x\in D\cap X_n\;\forall y\in D(x,\frac1{2^{m}})\;\; |f\circ q(x)-f\circ q(y)|\le \frac1{2^n}\}$ is of type $C_\delta$ in $\dCF(X)$,
\item $\xi(P_{n\ge m})=\{f\in C_p(X):\exists x\in D\cap X_n\;\exists y\in D(x,\frac2{2^{m}})\;\; |f(x)-f(y)|>\frac1{2^n}\}$ is of type $C_\sigma$ in $\dCF(X)$,
\item $\xi(P_{x<r})=\{f\in C(X):f(q(x))<r\}$ is open in $\dCF(X)$,
\item $\xi(P_{x>r})=\{f\in C(X):f(q(x))>r\}=\bigcup_{n\in\w}\{f\in C(X):f(q(x))\ge r+\frac1{2^n}\}=
\bigcup_{n\in\w}(C(X)\setminus \lceil q(x);r+\frac1{2^n}\rceil)$ is of type $F_\sigma$ in $\dCF(X)$.
\end{itemize}
\end{proof}

\begin{claim} The function space $C_p(X)$ is $F_\sigma$-Lusin.
\end{claim}

\begin{proof} By Claims~\ref{cl:sur}, \ref{cl:inj} and \ref{cl:cont}, the map $\xi:P\to C_k(X)$ is bijective and continuous. Since the identity map $C_k(X)\to C_p(X)$ is continuous, the map $\xi:P\to C_p(X)$ is continuous as the composition of two continuous maps. By Claim~\ref{cl:Cp->P}, the inverse map $\xi^{-1}:C_p(X)\to P$ is $F_\sigma$-measurable, which implies that the space $C_p(X)$ is $F_\sigma$-Lusin.
\end{proof}

\begin{claim} The function space $C_k(X)$ is $F_\sigma$-Lusin.
\end{claim}

 \begin{proof} By Claims~\ref{cl:sur}, \ref{cl:inj} and \ref{cl:cont}, the map $\xi:P\to C_k(X)$ is bijective and continuous. By Claim~\ref{cl:Cp->P} the map $\xi^{-1}:C_p(X)\to P$ is $F_\sigma$-measurable. The continuity of the identity map $C_k(X)\to C_p(X)$ implies that the map $\xi^{-1}:C_k(X)\to P$ is $F_\sigma$-measurable (as the composition of a continuous and $F_\sigma$-measurable maps). Now we see that the Polish space $P$ and the map $\xi:P\to C_k(X)$ witness that the space $C_k(X)$ is $F_\sigma$-Lusin.
\end{proof}

\begin{claim} The function space $\dCF(X)$ is $C_{\delta\sigma}$-Lusin.
\end{claim}

\begin{proof}  By Claims~\ref{cl:sur}, \ref{cl:inj} and \ref{cl:cont}, the map $\xi:P\to C_k(X)$ is bijective and continuous. Since the identity map $C_k(X)\to \dCF(X)$ is continuous, the map $\xi:P\to \dCF(X)$ is continuous (as the composition of two continuous maps). By Claim~\ref{cl:CF->P}, the inverse map $\xi^{-1}:\dCF(X)\to P$ is $C_{\delta\sigma}$-measurable, which implies that the space $C_p(X)$ is $C_{\delta\sigma}$-Lusin.
\end{proof}
\end{proof}

\section{Proof of Theorem~\ref{t:main}}\label{s:proof-main}

In this section, for a Tychonoff space $X$ we shall prove the implications
$$(3)\Leftarrow(2)\Leftrightarrow(1)\Ra(4)\Ra(6)\Ra(7)\Ra(8)\Ra(6)\Ra(7)\Ra(10)\Ra(11)\Ra(9)\Ra(10)\Ra(12)$$ of Theorem~\ref{t:main}.
\smallskip

$(2)\Ra(3)$ Assume that the function space $C_k(X)$ is Polish. The continuity of the identity map $C_k(X)\to\dCF(X)$ implies that the space $\dCF(X)$ is Lusin. Since the identity map $\dCF(X)\to C_k(X)$ is $C_\sigma$-measurable  (by Lemma~\ref{l:F->k}), the space $\dCF(X)$ is $C_\sigma$-Lusin.
\smallskip

$(2)\Ra(1)$ If the function space $C_k(X)$ is Polish, then by Corollary 5.2.5 in \cite{McNbook}, $X$ is a cosmic hemicompact $k$-space. The hemicompactness of $X$ yields an increasing sequence  $(K_n)_{n\in\w}$ of compact sets in $X$ such that each compact subset of $X$ is contained in some set $K_n$. Consider the locally compact subspace $M=\bigcup_{n\in\w}(K_n\times\{n\})$ of the product $X\times\w$ where the ordinal $\w$ is endowed with the discrete topology. Let $q:M\to X$ be the natural projection. We claim that the map $q$ is quotient. Indeed, take any subset $A\subset X$ such that the preimage $q^{-1}(A)$ is closed in $X$. Then $A\cap K_n$ is closed in $K_n$ for every $n\in\w$.  Since each compact set $K\subset X$ is contained in some $K_n$, the intersection $A\cap K=(A\cap K_n)\cap K$ is closed in $K$. Since $X$ is a $k$-space, the set $A$ is closed in $X$. Therefore, the map $q$ is quotient. Since $M$ is open in its one-point compactification, the space $X$ is $G$-quotient.
\smallskip

$(1)\Ra(2)$ Assume that $X$ is $G$-quotient and find a quotient surjective map $q:M\to X$ defined on an open subspace $M$ of a compact metrizable space. Write the locally compact space $M$ as the countable union $M=\bigcup_{n\in\w}U_n$ of an increasing sequence $(U_n)_{n\in\w}$ of open sets such that each set $U_n$ has compact closure $\overline{U_n}$, contained in $U_{n+1}$. By Claim~\ref{cl:union}, for every compact set $K\subset X$ there exists a number $n\in\w$ such that $K\subset q(U_n)\subset q(\overline{U_n})$. Now we see that the sequence of compact sets $(q(\overline{U_n}))_{n\in\w}$ witnesses that the space $X$ is hemicompact. By Theorem 11.3 \cite{Grue}, $X$ is a cosmic $k$-space, and by Corollary~5.2.5 of \cite{McNbook}, for the cosmic  hemicompact  $k$-space $X$, the function space $C_k(X)$  is Polish.
\smallskip

The implication $(1)\Ra(4)$ is trivial, $(4)\Ra(5)$ is proved in Lemma~\ref{l:1=>2} and $(5)\Ra(7)$ is trivial. The implication $(7)\Ra(8)$ follows from the continuity of the identity map $C_k(X)\to \dCF(X)$.
\smallskip

$(8)\Ra(6)$ Assume that the space $\dCF(X)$ is Lusin. Then $\dCF(X)$ has a countable network and $X$ is an $\aleph_0$-space by Theorem~\ref{t:k}. By Theorem~\ref{t:Cp-cosmic}, the function space $C_p(X)$ is cosmic. By Lemma~\ref{l:F->p}, the identity map $\dCF(X)\to C_p(X)$ is Borel and by Theorem~\ref{t:L}, the cosmic space $C_p(X)$ is Lusin.
\smallskip

$(6)\Ra(7)$ Assume that $C_p(X)$ is Lusin and $X$ is an $\aleph_0$-space. By Theorem~\ref{t:k}, the function space $C_k(X)$ is cosmic. By Lemma~\ref{l:p->k},
 the identity map $C_p(X)\to C_k(X)$ is Borel and by Theorem~\ref{t:L}, the cosmic space $C_k(X)$ is Lusin.
\smallskip

The implication $(7)\Ra(10)$ is trivial and $(10)\Ra(11)$ follows from the continuity of the identity map $C_k(X)\to \dCF(X)$.
\smallskip

$(11)\Ra(9)$ Assume that the space $\dCF(X)$ is Suslin. Then $\dCF(X)$ has a countable network and $X$ is an $\aleph_0$-space by Theorem~\ref{t:k}. By Theorem~\ref{t:Cp-cosmic}, the function space $C_p(X)$ is cosmic. By Lemma~\ref{l:F->p}, the identity map $\dCF(X)\to C_p(X)$ is Borel and by Theorem\ref{t:S}, the space $C_p(X)$ is Suslin.
\smallskip

$(9)\Ra(10)$ Assume that $C_p(X)$ is Suslin and $X$ is an $\aleph_0$-space. By Theorem~\ref{t:k}, the function space $C_k(X)$ is cosmic. By Lemma~\ref{l:p->k},
 the identity map $C_p(X)\to C_k(X)$ is Borel and by Theorem~\ref{t:S}, the  space $C_k(X)$ is Suslin.
\smallskip

$(10)\Ra(12)$ If the space $C_k(X)$ is Suslin, then so is the space $C_p(X)$ (being a continuous image of $C_k(X)$. By Calbrix's Theorem~\ref{t:Calbrix}, the space $X$ is $\sigma$-compact.

\section{Discussing Example~\ref{ex}}\label{s:ex}

In this section we prove that for the quotient space $X=\w^{\le\w}/\w^\w$ from Example~\ref{ex}, the function spaces $C_p(X)$, $C_k(X)$ and $\dCF(X)$ are not Suslin.

For a topological space $T$ denote by $T'$ the set of non-isolated points of $T$ and observe that $$C_p'(T):=\{f\in C_p(T):f(T')\subset\{0\}\}$$is a closed linear subspace of $C_p(T)$.

We recall that the discrete subspace $\w^{<\w}$ of the Polish space $\w^{\le\w}=\w^{<\w}\cup\w^\w$ carries the partial order $\le$ defined by $x\le y$ iff $x=y{\restriction}n$ for some $n\in\w$. Endowed with this partial order, the set $\w^{<\w}$ is a tree (which means that for any $x\in\w^{<\w}$ the set ${\downarrow}x=\{y\in\w^{<\w}:y\le x\}$ is finite and linearly ordered). A subtree $T$ of $\w^{<\w}$ is {\em well-founded} if it contains no infinite linearly ordered subsets.

In the function space $C_p(\w^{\le\w})$ consider the closed subspace
$$
\begin{aligned}
M_0(\w^{\le\w})=\{f\in C_p(\w^{\le\w}):&\;f(\w^\w)\subset\{0\},\;f(\w^{<\w})\subset\{0,1\},\\
&\;\forall x,y\in\w^{<\w}\;(x\le y\;\Ra f(x)\ge f(y)\}
\end{aligned}
$$consisting of non-increasing continuous functions $f:\w^{\le\w}\to\{0,1\}$ that vanish on the subspace $\w^\w$.

For any function $f\in M_0(\w^{\le\w})$ the preimage $f^{-1}(1)$ is a well-founded subtree of the tree $\w^{<\w}$. So, the space $M_0(\w^{\le\w})$ can be identified with the space $WF$ of well-founded trees on $\w$. By \cite[32.B]{Ke}, the space $WF$ is coanalytic but not analytic and so is the space $M_0(\w^{\le\w})$.
Let us recall that a subset $A$ of a Polish space $P$ is {\em analytic} (resp. {\em coanalytic}) if the space $A$ (resp. $P\setminus A$) is Suslin.
\smallskip

Consider the quotient space $X=\w^{\le\w}/\w^\w$ of the Polish space $\w^{\le\w}$ by its closed nowhere dense subspace $\w^\w$. It is clear that $X$ is a countable Tychonoff space with a unique non-isolated point $\{\w^\w\}$.
By \cite[11.3]{Grue}, $X$ is a sequential $\aleph_0$-space.

\begin{claim}\label{cl:nS} The function space $C'_p(X)=\{f\in C_p(X):f(X')\subset\{0\}\}$ is not Suslin.
\end{claim}

\begin{proof} Let $q:\w^{\le\w}\to X$ be the quotient map. It induces a continuous map $q^*:C_p(X)\to C_p(\w^{\le \w})$, $q^*:f\mapsto f\circ q$. Observe that $q^*(C_p'(X))=C'_p(\w^{\le\w})$, where $C_p'(\w^{\le\w})=\{f\in C_p(\w^{\le\w}):f(\w^\w)\subset\{0\}\}$. Assuming that the space $C_p'(X)$ is Suslin, we would conclude that its  continuous image $C'_p(\w^{\le\w})$ is Suslin.  On the other hand, $C_p'(\w^{\le\w})$ contains the non-analytic space $M_0(\w^{\le\w})$ as a closed subspace and hence $C_p'(\w^{\le\w})$ cannot be Suslin.
\end{proof}

\begin{claim} The function spaces $C_p(X),C_k(X)$ and $\dCF(X)$ are not Suslin.
\end{claim}

\begin{proof} The space $C_p(X)$ is not Suslin since it contains the closed subsapce $C'_p(X)$ which is not Suslin by Claim~\ref{cl:nS}. By Theorem~\ref{t:main}, the space $C_k(X)$ and $\dCF(X)$ are not Suslin, too.
\end{proof}

\newpage

\end{document}